\documentclass[12pt,oneside]{amsart}

\setcounter{tocdepth}{1}

\usepackage{amssymb,latexsym}
\usepackage{mathtools}
\usepackage{enumerate}  
\usepackage{a4wide}    

\usepackage[colorlinks=true,citecolor=blue,linkcolor=blue]{hyperref}
\usepackage{latexsym,amsmath,amssymb}
\usepackage{accents}
\usepackage{color}  
\title{Three examples of Sharp Commutator Estimates via Harmonic Extensions}

\author{Armin Schikorra}
\address[Armin Schikorra]{Department of Mathematics,
University of Pittsburgh,
301 Thackeray Hall,
Pittsburgh, PA 15260, USA}
\email{armin@pitt.edu}

\setlength\parindent{0pt}

\belowdisplayskip=18pt plus 6pt minus 12pt \abovedisplayskip=18pt
plus 6pt minus 12pt
\parskip 8pt plus 1pt




\newtheorem{theorem}{Theorem}
\newtheorem{lemma}[theorem]{Lemma}

\theoremstyle{definition}
\newtheorem{remark}[theorem]{Remark}


\def\divv{{\rm div\,}}

\def\curl{{\rm curl\,}}
\def\lip{{\rm Lip\,}}


\renewcommand{\div}{\divv}
\newcommand{\R}{\mathbb{R}}

\newcommand{\brac}[1]{\left (#1 \right )}

\newcommand{\Ep}{\bigwedge\nolimits}

\newcommand{\barint}{
\rule[.036in]{.12in}{.009in}\kern-.16in \displaystyle\int }

\newcommand{\barcal}{\mbox{$ \rule[.036in]{.11in}{.007in}\kern-.128in\int $}}


\def\mvint_#1{\mathchoice
          {\mathop{\vrule width 6pt height 3 pt depth -2.5pt
                  \kern -8pt \intop}\nolimits_{\kern -3pt #1}}%
          {\mathop{\vrule width 5pt height 3 pt depth -2.6pt
                  \kern -6pt \intop}\nolimits_{#1}}%
          {\mathop{\vrule width 5pt height 3 pt depth -2.6pt
                  \kern -6pt \intop}\nolimits_{#1}}%
          {\mathop{\vrule width 5pt height 3 pt depth -2.6pt
                  \kern -6pt \intop}\nolimits_{#1}}}





\newcommand{\lap}{\Delta }
\newcommand{\laph}{\laps{1}}
\newcommand{\aleq}{\lesssim}

\newcommand{\aeq}{\approx}
\newcommand{\Rz}{\mathcal{R}}

\newcommand{\laps}[1]{(-\lap) ^{\frac{#1}{2}}}

\newcommand{\lapms}[1]{I^{#1}}

\numberwithin{figure}{section}

\begin{document}
\begin{abstract}
Recently, Lenzmann and the author observed how to obtain a large class of sharp commutator estimates by a combination of an integration by parts, an harmonic extension, and trace space estimates. In this survey we review this approach in three concrete examples: the Jacobian estimate by Coifman-Lions-Meyer-Semmes, the Coifman-Rochberg-Weiss commutator estimate for Riesz transforms, and a Kato-Ponce-Vega-type inequality.
\end{abstract}

\maketitle 
\tableofcontents

\section{An estimate by Coifman-Lions-Meyer-Semmes}\label{ch:clms}
Throughout this text, we will only consider maps which are smooth and have compact support, i.e. $C_c^\infty$-maps. We will make no attempt to obtain an optimal space in the estimates which we consider hold. Rather, our focus lies on obtaining optimal estimates, which by density arguments may lead to these optimal spaces.

Let $u \in C_c^\infty(\R^n,\R^n)$ and $\varphi \in C_c^\infty(\R^n)$. The Jacobian of $u$, sometimes denoted by ${\rm Jac}(u)$ is the determinant of the gradient
\[
{\rm Jac}(u) = \det(\nabla u).
\]
The Jacobian naturally appears in geometric contexts, since it describes the volume of a square distorted by the linear map $\nabla u(x)$ -- as we know from the transformation rule for integrals.

The following estimates are then quite obvious\footnote{Here and henceforth by $A \aleq B$ we mean that $A \leq C B$ for some constant $C$ which is always supposed to not depend on $A$ or $B$ (or other \emph{relevant} quantities).}
\begin{equation}\label{eq:jacstupid}
\int_{\R^n} \det(\nabla u) \varphi \aleq \|\nabla u\|_{L^n}^n\, \|\varphi\|_{L^\infty}
\end{equation}
and (by an integration by parts)
\[
\int_{\R^n} \det(\nabla u) \varphi \aleq \|u\|_{L^\infty}\, \|\nabla u\|_{L^n}^{n-1}\, \|\nabla \varphi\|_{L^n}
\]
But these are not sharp estimates. 

This had somewhat been known for quite some time in the theory of geometric PDEs (again: the Jacobian is a very geometric object and appears for example in surfaces of prescribed mean curvature) \cite{Wente-1969,Reshetnyak-1967}, but it took until the 1990s to really understand the reason (in the sense of Harmonic Analysis). After an earlier result by M\"uller \cite{Mueller-1990} (who proved $L\log L$-estimates for the Jacobian), Coifman-Lions-Meyer-Semmes \cite{CLMS93} obtained the following remarkable estimate
\begin{equation}\label{eq:clms}
\int_{\R^n} \det(\nabla u) \varphi \aleq \|\nabla u\|_{L^n}^{n}\, [\varphi]_{BMO}
\end{equation}
Here $[\varphi]_{BMO}$ denotes the seminorm of the space of function of bounded mean oscillation (BMO), namely
\[
 [\varphi]_{BMO} := \sup_{r > 0, x_0 \in \R^n} \mvint_{B_r(x_0)} \left |\varphi - \mvint_{B_r(x_0)} \varphi \right |.
\]
Estimate \eqref{eq:clms} is a strictly weaker estimate than \eqref{eq:jacstupid}, since 
\[
[\varphi]_{BMO} \aleq \|\varphi\|_{L^\infty}
\]
Two different methods are given in \cite{CLMS93} in order to obtain \eqref{eq:clms}:
\begin{enumerate}
\item Showing that $\det(\nabla u)$ belongs to the Hardy space $\mathcal{H}^1(\R^n)$ if $\nabla u \in L^n(\R^n,\R^n)$ and then using duality with $BMO$
\item Reduction to the Coifman-Rochberg-Weiss commutator, see section~\ref{ch:CRW}
\end{enumerate}
Before proceeding, let us stress that \eqref{eq:clms} has been a crucial tool for regularity theory of geometric PDEs, such as regularity theory for the equations for surfaces of prescribed mean curvature and for harmonic maps into manifolds, see for example \cite{Helein90,Helein91,Bethuel-1992,Riviere-2007}, see also the monograph  \cite{Helein-book} and for some open problems the survey \cite{SS17}. The usefulness of \eqref{eq:clms} is based on the continuous embedding of the Sobolev space $W^{1,n}$ into $BMO$ (and there is no embedding of $W^{1,n}$ into $L^\infty$ for $n \neq 1$!), which makes the following estimate a consequence of \eqref{eq:clms}
\begin{equation}\label{eq:clmsbmoemb}
\int_{\R^n} \det(\nabla u) \varphi \aleq \|\nabla u\|_{L^n}^{n}\, \|\nabla \varphi\|_{L^n}
\end{equation}
Actually, in terms of Lorentz space one can improve this estimate: since $W^{1,(n,\infty)}$ embeds into $BMO$,
\begin{equation}\label{eq:clmsbmoembinfty}
\int_{\R^n} \det(\nabla u) \varphi \aleq \|\nabla u\|_{L^n}^{n}\, \|\nabla \varphi\|_{L^{(n,\infty)}}
\end{equation}
\subsection{``Intermediate'' sharp estimates for the Jacobians}
The estimate \eqref{eq:clms}, \eqref{eq:clmsbmoemb} can be interpreted also as a distributional definition of the Jacobian (cf. \cite{BN11}): $\det(\nabla u)$ is well defined as an element of $BMO^\ast$ or $(W^{1,n})^\ast$. 

But this can be ``improved'' in the differential order of the Sobolev spaces: If in \eqref{eq:clms} one ``allows more derivatives'' to ``fall'' on $\varphi$, one can \emph{uniformly} reduce the derivatives that ``fall'' on $u$. Namely, the following estimate is true
\begin{equation}\label{eq:clmsintermediate}
\int \det(\nabla u) \varphi \aleq [u^1]_{\dot{W}^{s_1,p_1}}\, \ldots [u^n]_{\dot{W}^{s_n,p_n}}\, [\varphi]_{\dot{W}^{s_{n+1},p_{n+1}}}
\end{equation}
holds whenever $s_1,\ldots,s_{n+1} > 0$ and $p_1,\ldots,p_{n+1} \in (1,\infty)$ are so that 
\begin{equation}\label{eq:sin}
\sum_{i=1}^{n+1} s_i = n
\end{equation}
and
\begin{equation}\label{eq:pin}
\sum_{i=1}^{n+1} \frac{1}{p_i} = 1
\end{equation}
Estimates of this sort were observed not so long after the work of \cite{CLMS93}, see e.g. \cite{SY99}; Indeed, one can hope to obtain this from multilinear interpolation of the inequality \eqref{eq:clms}. It seems however that some versions of estimates of the form \eqref{eq:clmsintermediate} were known to some experts even \emph{earlier} than the work in \cite{CLMS93}  -- e.g. the technique in \cite{Tartar-1982} hints into this direction. This seems to be the case due to the fact that \eqref{eq:clmsintermediate} is technically \emph{easier} than \eqref{eq:clms}. We shall make the last (very superficial) statement more precise below: we consider proofs by Tartar \cite{Tartar-1982}, by Brezis-Nguyen \cite{Tartar-1982}, and Lenzmann and the author \cite{LS16} of different versions of \eqref{eq:clmsintermediate}. We then show that, in order to prove the limit space BMO-inequality \eqref{eq:clms}, one more push is needed.

\subsection{``Intermediate'' estimates: An argument due to Tartar}
The technique for proving the so-called Wente's inequality in \cite{Tartar-1982} inspire the following argument.

Denote by $\mathcal{F}$ the Fourier transform. For simplicity of presentation we restrict ourselves to the case $n=2$, but this arguments takes over to any dimension. The properties of the Fourier transform (products in geometric space become convolutions in phase space, derivatives in geometric space become polynomials in phase space) imply that one can write
\[
\mathcal{F} \det(\nabla u^1 | \nabla u^2)(\xi) = c\, \int \det (\xi, \xi-\eta) \mathcal{F}u^1(\xi-\eta)\, \mathcal{F}u^2(\eta)\, d\eta
\]
where $c$ is a (real) number. The compensation effect that is responsible for the correctness of estimates such as \eqref{eq:clms}, \eqref{eq:clmsintermediate} etc. is the following: By the properties of the determinant we have 
\[
\det(\xi,\xi-\eta) = -\det(\xi,\eta) = \det(\eta,\xi-\eta).
\]
In particular, the following estimates are true:
\[ 
|\det(\xi,\xi-\eta)| \aleq \begin{cases}
|\xi|\, |\xi-\eta|\\
|\xi|\, |\eta|\\
|\eta|\, |\xi-\eta|
\end{cases}
\]
Interpolating between these three options, for any $s_1,s_2,s_3 > 0$, $s_1 + s_2 + s_3 = 2$ (cf. \eqref{eq:sin}) we have 
\begin{equation}\label{eq:detest}
|\det(\xi,\xi-\eta)| \aleq |\xi-\eta|^{s_1}\, |\eta|^{s_2}\, |\xi|^{s_3} 
\end{equation}
So we set 
\[
\kappa(\xi,\eta) := |\xi-\eta|^{-s_1}\, |\eta|^{-s_2}\, |\xi|^{-s_3}\, \det(\xi,\xi-\eta),
\]
which smooth away from $\eta = 0$ and $\xi = 0$ and $\eta = \xi$ and satisfies 
\[
|\kappa(\xi,\eta)| \aleq 1.
\]
Define the bilinear operator $T = T(a,b)$ as 
\[
\mathcal{F}(T(a,b)) := \int \kappa(\xi, \eta) \mathcal{F} a(\xi-\eta)\, \mathcal{F} b(\eta)\, d\eta
\]
and use the Plancherel Theorem to find
\[
\int \det(\nabla u^1 | \nabla u^2) \varphi = \int T(\laps{s_1}u^1, \laps{s_2} u^2)\, \laps{s_3} \varphi
\]
In some sense $T$ is a zero-multiplier operator, see \cite[Theorem 5.1.]{Tomita2010}, so one obtains the estimate 
\[
\begin{split}
&\int \det(\nabla u^1 | \nabla u^2)\, \varphi \\
=& \int T(\laps{s_1}u^1, \laps{s_2} u^2)\, \laps{s_3} \varphi\\
\leq & \|T(\laps{s_1}u^1, \laps{s_2} u^2)\|_{L^{p_3'}}\, \|\laps{s_3} \varphi\|_{L^{p_3}}\\
\aleq & \|\laps{s_1} u^1\|_{L^{p_1}}\, \|\laps{s_2} u^2\|_{L^{p_2}}\, \|\laps{s_3} \varphi\|_{L^{p_3}} 
\end{split}
\]
This is not exactly the same estimate as in \eqref{eq:clmsintermediate}, since $W^{s,p}$ is not characterized by the norm $\|\laps{s} f\|_{L^p}$ unless $p =2$ -- but it clearly goes into the right direction -- and with a bit more care (and para-products) one can obtain \eqref{eq:clmsbmoembinfty} from this strategy. \qed

\begin{remark}
\begin{itemize} 
\item Observe that it does not seem to be obvious how this possibly could lead to the BMO-estimate \eqref{eq:clms} for $s_3 \to 0$ .
 \item In order to avoid multilinear Fourier multipliers one can conduct the argument described above also without Fourier transform. Instead, one can use the representation
 \[
  \nabla u(x) = \Rz \laph u(x) = c\int_{\R^n} \frac{(x-y)}{|x-y|^{n+1}} \laph u(y)\, dy.
 \]
Now a similar estimate to \eqref{eq:detest} can be used for the kernels $\frac{(x-y)}{|x-y|^{n+1}}$ instead of the Fourier symbol $i\xi/|\xi|$. This was used, for ``intermediate estimates'' of some commutators in \cite{Schikorra11}, see also \cite{DLS14,BRS16}.
\end{itemize}
\end{remark}
\subsection{``Intermediate'' estimates for  \texorpdfstring{$s_1 = \ldots = s_{n+1}=\frac{n}{n+1}$}{s1=...=s(n+1)}: A proof due to Brezis-Nguyen}\label{s:BN}
The following is a beautiful idea by Brezis and Nguyen \cite{BN11} for \[s:= s_1 = \ldots, s_{n+_1} = \frac{n}{n+1}\] and \[p:= p_1 = \ldots = p_n = n+1.\]

Denote by \[\R^{n+1}_+ = \R^n \times (0,\infty)\] 
and from now on we adapt the notation that $x \in \R^n$ and $t \in (0,\infty)$, i.e. variables in $\R^{n+1}$ are $(x,t)$.

Let $U: \R^{n+1}_+ \to \R^n$ be an extension of $u: \R^n \to \R^n$, and $\Phi: \R^{n+1}_+ \to \R^n$ be an extension of $\varphi: \R^n \to \R$. 

Then, by Stokes' theorem (identifying $\R^n$ with $\R^n \times \{0\} = \partial \R^{n+1}_+$),
\[
\left |\int_{\R^n} \det(\nabla_x u^1,\ldots,\nabla_x u^n)\, \varphi\, dx\right |
= \left |\int_{\R^{n+1}_+} \det(\nabla_{x,t} U^1,\ldots,\nabla_{x,t} U^n, \nabla_{x,t} \Phi)\, d(x,t) \right |.
\]
Here $\nabla_x = (\partial_{x^1},\ldots,\partial_{x^n})$ denotes the gradient for functions in $\R^n$, and $\nabla_{x,t} = (\partial_{x^1},\ldots,\partial_{x^n},\partial_t)$ denotes the gradient for functions in $\R^{n+1}_+$. From the above equality we obtain by by H\"older's inequality,
\begin{equation}\label{eq:hoelder}
\left |\int_{\R^n} \det(\nabla_x u^1,\ldots,\nabla_x u^n)\, \varphi\, dx\right | \leq [U^1]_{\dot{W}^{1,n+1}(\R^{n+1}_+)}\, \ldots\, [U^n]_{\dot{W}^{1,n+1}(\R^{n+1}_+)}\, [\Phi]_{\dot{W}^{1,n+1}(\R^{n+1}_+)}.
\end{equation}
This estimate holds for \emph{any} extension $U^1,\ldots,U^n, \Phi: \R^{n+1}_+ \to \R$ of $u^1,\ldots,u^n,\varphi: \R^{n} \to \R$. In particular it holds for extensions $U_i$ that (approximately) realize the trace embedding \[W^{1,n+1}(\R^{n+1}_+) \hookrightarrow W^{\frac{n}{n+1},n+1} (\R^n),\]
namely for extensions $U$ of $u$ such that
\begin{equation}\label{eq:Urealizesu}
[U]_{W^{1,n+1}(\R^{n+1}_+)} \aeq [u]_{ W^{\frac{n}{n+1},n+1}(\R^n)}.
\end{equation}
For example, the \emph{harmonic extension} 
\begin{equation} 
\label{eq:harmext} U(x,t) = p_t \ast u(x)
\end{equation} gives \eqref{eq:Urealizesu}, where $p_t$ is the Poisson kernel
\begin{equation} 
\label{eq:poissonkernel}
p_t(z) = c \frac{t}{\brac{|z|^2 + t^2}^{\frac{n+1}{2}}}.
\end{equation}
But, (under certain assumptions on the integrability and decay, i.e. on $s$) also kernels of the form
\begin{equation} 
\label{eq:sharmext} 
p^s_t(z) = c \frac{t^s}{\brac{|z|^2 + t^2}^{\frac{n+s}{2}}}.
\end{equation}
satisfy \eqref{eq:Urealizesu}. 

Whatever choice for the extension we make, once \eqref{eq:Urealizesu} is satisfied we have obtained 
\[
\left |\int_{\R^n} \det(\nabla_x u^1,\ldots,\nabla_x u^n)\, \varphi\, dx\right | \leq [u^1]_{\dot{W}^{\frac{n}{n+1},n+1}(\R^{n+1}_+)}\, \ldots\, [u^n]_{\dot{W}^{\frac{n}{n+1},n+1}(\R^{n+1}_+)}\, [\Phi]_{\dot{W}^{\frac{n}{n+1},n+1}(\R^{n+1}_+)}
\]
which is \eqref{eq:clmsintermediate} for our special choice.\qed
\subsection{``Intermediate estimates'': general case }
Here we follow \cite{LS16} to obtain \eqref{eq:clmsintermediate} in full generality by the harmonic extension. 

By adapting in the above argument \eqref{eq:hoelder} the H\"older inequality it is easy to obtain 
\[\left |\int_{\R^n} \det(\nabla_x u^1,\ldots,\nabla_x u^n)\, \varphi\, dx\right | \leq [u^1]_{\dot{W}^{s_1,p_1}(\R^{n+1}_+)}\, \ldots\, [u^n]_{\dot{W}^{s_n,p_n}(\R^{n+1}_+)}\, [\Phi]_{\dot{W}^{s_{n+1},p_{n+1}}(\R^{n+1}_+)}
\]
for $p_i$ satisfying \eqref{eq:pin} and 
\[
s_i := 1-\frac{1}{p_i},
\]
that is, for trace spaces of $W^{1,p_i}$. But what to do for estimates in spaces $W^{s,p}$ which are not trace spaces of $W^{1,q}$, i.e. for $s \neq 1-\frac{1}{p_i}$? Weights in $t$-direction are the answer. We can smuggle those in by writing with the help of \eqref{eq:sin}, \eqref{eq:pin}
\[
1 = t^{1-s_1 - \frac{1}{p_1}}\cdot \ldots \cdot t^{1-s_{n+1} - \frac{1}{p_{n+1}}}.
\]
Then, from the argument in Section~\ref{s:BN}, we obtain
\[
\begin{split}
&\left |\int_{\R^n} \det(\nabla_x u^1,\ldots,\nabla_x u^n)\, \varphi\, dx\right |\\ 
\leq& \|t^{1-s_1 - \frac{1}{p_1}} \nabla_{x,t} U^1\|_{L^{p_1}(\R^{n+1}_+)}\, \ldots\, \|t^{1-s_n - \frac{1}{p_n}}\nabla_{x,t} U^n\|_{L^{p_n}(\R^{n+1}_+)}\, \|t^{1-s_{n+1} - \frac{1}{p_{n+1}}}\nabla_{x,t} \Phi\|_{L^{p_{n+1}}(\R^{n+1}_+)}
\end{split}
\]
Again this inequality holds for all possible extensions $U^1,\ldots,U^n, \Phi: \R^{n+1}_+ \to \R$ of $u^1,\ldots,u^n,\varphi: \R^{n} \to \R$, and we need to find an extension such that 
\begin{equation}\label{eq:weightrace}
\|t^{1-s - \frac{1}{p}} \nabla_{x,t} U\|_{L^{p_1}(\R^{n+1}_+)} \aeq [u]_{ W^{s,p}(\R^n)}.
\end{equation}
We are lucky: under some integrability and decay assumptions extensions such as the one defined in \eqref{eq:sharmext}, and in particular the harmonic extension \eqref{eq:harmext}, satisfy \eqref{eq:weightrace}. The proof of this fact is somewhat scattered throughout the literature: an early work where this appears is \cite{U69}, see also \cite{MR2015}; it also is partially contained (somewhat hidden) in Stein's books, e.g. \cite{S93}. As a special case the $s$-harmonic extension theory was popularized in the 2010s in the PDE community by Caffarelli and Silvestre \cite{CS07}. In terms of Besov- and Triebel spaces the most general statement known to the author is due to Bui and Candy \cite{BC17}. 
Thus, by the right choice of extension (for example the harmonic extension), we obtain \eqref{eq:clmsintermediate} in its full generality.\qed

Again, one should notice that it is in no way obvious how $s_{n+1} \to 0$ implies the BMO-estimate \eqref{eq:clms}. This is what we meant after Equation \eqref{eq:pin} when we said that the $BMO$-estimate \eqref{eq:clms} is \emph{structurally more complex} than the ``intermediate'' estimate \eqref{eq:clmsintermediate}.
In the next section, we shall see what additional trick we need: it's an additional integration by parts.
\subsection{The BMO-estimate}
In this section we prove the $BMO$-estimate \eqref{eq:clms} first obtained in \cite{CLMS93}. More precisely, we show the estimate
\begin{equation}\label{eq:clmsp}
\int_{\R^n} \det(\nabla u) \varphi \aleq \|\nabla u^1\|_{L^{p_1}} \ldots \|\nabla u^n\|_{L^{p_n}}\, [\varphi]_{BMO}
\end{equation}
holds whenever $p_1,\ldots,p_n \in (1,\infty)$ so that 
\[
\sum_{i=1}^n \frac{1}{p_i} = 1.
\]
The proof of the BMO-estimate as in \cite{LS16} follows from an adaption of the above arguments in three directions: Firstly, we will apply an additional integration by parts in $t$-direction, namely the almost trivial observation that for sufficient decay at $\infty$ a smooth function $f$ satisfies
\begin{equation}\label{eq:pit}
\int_{0}^\infty f(t) dt = -\int_{0}^\infty t\, \partial_t f(t) dt. 
\end{equation}
Secondly, we will (for the first time) use the harmonicity of the extension: if $\lap_{x,t} F \equiv 0$ then obviously
\[
\partial_{tt} F = - \lap_x F.
\]
We will use this fact essentially only in order to replace derivatives in $t$-direction (which we \emph{cannot} integrate by parts in $\R^{n+1}_+$ since there would appear boundary terms) by derivatives in $x$-directions (which we \emph{can} integrate by parts in $\R^{n+1}_+$ without having boundary terms).
Thirdly, we will need a replacement for the trace estimate such as \eqref{eq:Urealizesu}, \eqref{eq:weightrace} for BMO: Carleson measure estimates.

\subsubsection{An additional integration by parts}
As always, let $U: \R^{n+1}_+ \to \R^n$ be an extension of $u: \R^n \to \R^n$, and $\Phi: \R^{n+1}_+ \to \R^n$ be an extension of $\varphi: \R^n \to \R$. 

By Stokes' theorem, as before,
\begin{equation}\label{eq:stokesbmo}
\mathcal{C} := \left |\int_{\R^n} \det(\nabla_x u^1,\ldots,\nabla_x u^n)\, \varphi\, dx\right |
= \left |\int_{\R^{n+1}_+} \det(\nabla_{x,t} U^1,\ldots,\nabla_{x,t} U^n, \nabla_{x,t} \Phi)\, d(x,t) \right |.
\end{equation}
Now we perform an additional integration by parts in $t$-direction, namely \eqref{eq:pit}.
\[\mathcal{C} = \left |\int_{\R^{n+1}_+} t\, \partial_t \det(\nabla_{x,t} U^1,\ldots,\nabla_{x,t} U^n, \nabla_{x,t} \Phi)\, d(x,t) \right |.
\]
We claim that \emph{if $U^i$, $\Phi$ are harmonic}, then it is possible to estimate $\mathcal{C}$ by
\begin{equation}\label{eq:goalclms}
\mathcal{C} \aleq \sum_{i=1}^n \int_{\R^{n+1}_+} t\, |\nabla_{x,t} U^1|\ldots |\nabla_{x,t} \nabla_x U^i| \ldots |\nabla_{x,t} U^n|\, |\nabla_{x,t} \Phi|
\end{equation}
That is, a second derivative hits one of the $U$'s and it does so in $x$-direction.
\begin{proof}[Proof of \eqref{eq:goalclms}] It might be interesting to observe that the following argument does not use the determinant structure anymore. It simply follow by the product structure of the integral. The determinant structure was only important for the first integration by parts \eqref{eq:stokesbmo}.

Assume that $U^i$ and $\Phi^i$ are harmonic. We split the integral in $n+1$ parts,
\[
\mathcal{C} \leq I_1 + \ldots + I_n + II 
\]
where for $i = 1,\ldots, n$,
\[
I_{i} := \left |\int_{\R^{n+1}_+} \det(\nabla_{x,t} U^1,\ldots, \partial_t \nabla_{x,t} U^i,\ldots ,\nabla_{x,t} U^n, \nabla_{x,t} \Phi)\, d(x,t) \right |,
\]
and
\[
II := \left |\int_{\R^{n+1}_+} t\, \det(\nabla_{x,t} U^1,\ldots,\nabla_{x,t} U^n, \nabla_{x,t} \partial_t \Phi)\, d(x,t) \right |,
\]
For $i=1,\ldots,n$ harmonicity implies that 
\[
|\partial_t \nabla_{x,t} U^i| = |(\partial_{x^1} \partial_t U^i,\ldots,\partial_{x^n} \partial_t U^i, - \lap_x U^i)| \leq |\nabla_{x,t}\, \nabla_x U^i|,
\]
so for $I_1,\ldots,I_n$ the estimate \eqref{eq:goalclms} is immediate.

For $II$ we have, again by harmonicity,
\[
\det(\nabla_{x,t} U^1,\ldots,\nabla_{x,t} U^n, \nabla_{x,t} \partial_t \Phi) 
=
\det\left (
\begin{array}{c|c|c|c}
\nabla_{x,t} U^1& \ldots & \nabla_{x,t} U^n& \begin{array}{c} \nabla_{x} \partial_t \Phi\\
-\lap_x \Phi \end{array}
\end{array}
 \right ) 
\]
Thus, $II$ can be estimated by (for a second we write $z = (x,t)$)
\[
\begin{split}
II \leq& \sum_{i_1,\ldots,i_n = 1}^{n+1} \sum_{j=1}^n \left |\int_{\R^{n+1}_+} t\, \partial_{z_{i_1}} U^1\cdot \ldots \cdot \partial_{z_{i_n}} U^n\cdot \partial_{x_{j}} \partial_t \Phi \right |\\
&+ \sum_{i_1,\ldots,i_n = 1}^{n+1} \left |\int_{\R^{n+1}_+} t\, \partial_{z_{i_1}} U^1\cdot \ldots \cdot \partial_{z_{i_n}} U^n\cdot \lap_x \Phi \right |.
\end{split}
\] 
With an integration by parts in $x$-direction (there are no boundary terms in $x$-direction, which is the big difference to integration by parts in $t$-direction)
\[
\begin{split}
II \leq& \sum_{i_1,\ldots,i_n = 1}^{n+1} \sum_{j=1}^n \left |\int_{\R^{n+1}_+} t\, \partial_{x} \brac{\partial_{z_{i_1}} U^1\cdot \ldots \cdot \partial_{z_{i_n}} U^n}\cdot \partial_t \Phi \right |\\
&+ \sum_{i_1,\ldots,i_n = 1}^{n+1} \left |\int_{\R^{n+1}_+} t\, \nabla_x \brac{\partial_{z_{i_1}} U^1\cdot \ldots \cdot \partial_{z_{i_n}} U^n}\cdot \nabla_x \Phi \right |.
\end{split}
\]
Both terms satisfy the estimate that we claimed \eqref{eq:goalclms}.
\end{proof}

The reason we want \eqref{eq:goalclms} is that we find below a square function, and will use the square function estimate \cite[section I, \textsection 8.23, p.46]{S93} which states that (``tangential'' version)
\begin{equation}\label{eq:squaret}
\brac{\int_{\R^n} \brac{\int_{t=0}^\infty |\kappa_t \ast f(x)|^2\, \frac{dt}{t}}^p\, dx}^{\frac{1}{p}} \aleq \|f\|_{L^p(\R^n)}
\end{equation}
and (``non-tangential'' version)
\[
\brac{\int_{\R^n} \brac{\int_{|x-y| < t} |\kappa_t \ast f(y)|^2\, \frac{dt\, dy}{t^{n+1}}}^p \, dx}^{\frac{1}{p}} \aleq \|f\|_{L^p(\R^n)}
\]
hold true for kernels $\kappa$ with sufficient decay at infinity and $\int \kappa = 0$. In our case we will apply this to $f = \nabla_x u$, and $\kappa = \nabla_{x,t}\Big |_{t =1} p_t$, i.e. we use that
\[
\nabla_{x,t} \nabla_x U = (\nabla_{x,t} p_t) \ast \brac{\nabla_x u} = t\, \kappa_t \ast \nabla_x u.
\]
These are the trace estimates we treat in the next section.
\subsubsection{Trace estimates}\label{s:traceestjacob}
We have found in the last section that if $U^i$ and $\Phi: \R^{n+1}_+ \to \R$ denote the harmonic extensions (with decay to zero at infinity making them unique) of $u^i$ and $\varphi: \R^n \to \R$, then
\begin{equation}\label{eq:goalclms2}
\int_{\R^n} \det(\nabla u^1,\ldots,\nabla u^n)\, \varphi \aleq \sum_{i=1}^n \int_{\R^{n+1}_+} t\, |\nabla_{x,t} U^1|\ldots |\nabla_{x,t} \nabla_x U^i| \ldots |\nabla_{x,t} U^n|\, |\nabla_{x,t} \Phi|
\end{equation}
An important tool is now the characterization of $\varphi \in BMO$ in terms of the harmonic extension $\Phi$. The following theorem follows e.g. from \cite[IV, \textsection 4.3, Theorem 3. pp.159]{S93} or \cite[Theorem 7.3.8.]{GrafakosMF}
\begin{theorem}[Characterization of BMO by Carleson measures]
Let $\Phi: \R^{n+1}_+ \to \R$ be the ($s$-)harmonic extension of $\varphi \in C_c^\infty(\R^{n})$ as in \eqref{eq:harmext}, \eqref{eq:sharmext}. Then
\[
[\varphi]_{BMO} \aeq \brac{|B|^{-1} \sup_{B} \int_{T(B)} t|\nabla_{x,t} \Phi|^2\, dx\, dt}^{\frac{1}{2}}.
\]
Here the supremum is taken over balls $B \subset R^n$ and $T(B) \subset \R^{n+1}_+$ denotes the \emph{tent} over $B$, i.e. if $B = B(x_0,r)$ then 
\[
T(B) = \left \{(x,t) \in \R^{n+1}_+: |x-x_0| < t-r \right \}.
\] 
\end{theorem}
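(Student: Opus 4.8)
The plan is to prove the Carleson measure characterization of $BMO$ by the classical ``two inclusions'' strategy, dualizing with $\dot H^1$/atoms and the Fefferman--Stein description of $BMO$. First I would fix a ball $B = B(x_0,r)$ and estimate $\int_{T(B)} t |\nabla_{x,t} \Phi|^2\, dx\, dt$ in terms of $[\varphi]_{BMO}$. The key reduction here is that the expression is invariant (up to the scaling weight $|B|$) under replacing $\varphi$ by $\varphi - c$ for any constant $c$, since $\nabla_{x,t}$ kills constants; so I may assume $\varphi_B := \mvint_{B}\varphi = 0$ and split $\varphi = \varphi \,\bbbone_{2B} + \varphi\, \bbbone_{\R^n\setminus 2B} =: \varphi_1 + \varphi_2$. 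For $\varphi_1$, the harmonic extension of $\varphi_1$ has gradient controlled by the square-function estimate \eqref{eq:squaret} applied with $\kappa = \nabla_{x,t}|_{t=1}p_t$ (which has zero average and good decay), giving
\[
\int_{\R^{n+1}_+} t\, |\nabla_{x,t}\Phi_1|^2\, dx\, dt \aleq \|\varphi_1\|_{L^2(\R^n)}^2 \aleq |2B|\, \mvint_{2B} |\varphi - \varphi_B|^2 \aleq |B|\, [\varphi]_{BMO}^2,
\]
using the standard self-improvement ($L^1$-$BMO$ controls $L^2$-oscillation, John--Nirenberg). For $\varphi_2$, one exploits that on the tent $T(B)$ the point $(x,t)$ is at distance $\gtrsim |y - x_0|$ from the support of $\varphi_2$ whenever $|y-x_0|\geq 2r$, so the smoothing/decay of $\nabla_{x,t} p_t$ together with the cancellation $\int \varphi_2 \cdot(\text{average of }p) $ — more precisely writing $\nabla_{x,t}\Phi_2(x,t) = \int (\nabla_{x,t} p_t)(x-y)(\varphi(y)-\varphi_{2B})\,dy$ plus a harmless term — yields a pointwise bound $|\nabla_{x,t}\Phi_2(x,t)| \aleq r^{-1}[\varphi]_{BMO}$ on $T(B)$ (telescoping the $BMO$ averages over dyadic annuli produces the logarithm that is absorbed by the kernel decay), and then $\int_{T(B)} t\,|\nabla_{x,t}\Phi_2|^2 \aleq r^{-2}[\varphi]_{BMO}^2 \int_{T(B)} t\, dx\, dt \aleq |B|\,[\varphi]_{BMO}^2$, since $\int_{T(B)} t\, dx\, dt \aleq r^{n+2}$.

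For the reverse inequality — that control of the Carleson norm forces $\varphi \in BMO$ — I would use duality between $BMO$ and the Hardy space $\mathcal H^1$, testing $\varphi$ against an $\mathcal H^1$-atom $a$ supported in a ball $B$ with $\int a = 0$ and $\|a\|_\infty \leq |B|^{-1}$. Using that the harmonic extension $A(x,t) = p_t \ast a$ satisfies the square-function estimate and is essentially supported in the tent $T(2B)$ after using the vanishing mean of $a$, one writes $\int \varphi\, a = c\int_{\R^{n+1}_+} t\, \nabla_{x,t}\Phi \cdot \nabla_{x,t} A \, dx\,dt$ (a Green's/Plancherel-type identity, valid for harmonic extensions with decay, essentially an integration by parts in $\R^{n+1}_+$), and then Cauchy--Schwarz gives
\[
\left|\int \varphi\, a\right| \aleq \brac{\int_{T(2B)} t\,|\nabla_{x,t}\Phi|^2}^{1/2}\brac{\int_{\R^{n+1}_+} t\,|\nabla_{x,t} A|^2}^{1/2} \aleq \brac{|B|\,\|\nu\|_{\text{Carleson}}}^{1/2}\cdot \|a\|_{L^2}^{1/2}\cdot|B|^{1/2}
\]
which, since $\|a\|_{L^2}\aleq |B|^{-1/2}$, is $\aleq \|\nu\|_{\text{Carleson}}^{1/2}$ uniformly in the atom, whence $\varphi \in (\mathcal H^1)^\ast = BMO$ with the stated bound.

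Two technical points deserve care. The first, and I expect the \emph{main obstacle}, is making the Green's identity $\int \varphi\, a = c\int_{\R^{n+1}_+} t\,\nabla_{x,t}\Phi\cdot\nabla_{x,t} A$ rigorous: one must justify the integration by parts on the half-space (no boundary term at $t=0$ comes from the factor $t$, but one needs decay as $t\to\infty$ and as $|x|\to\infty$, which is where the vanishing mean and compact support of $a$ enter), and one must check the normalizing constant via the Fourier side, where $\widehat{p_t\ast f}(\xi) = e^{-t|\xi|}\hat f(\xi)$ turns $\int_0^\infty t\, e^{-2t|\xi|}\,dt = \tfrac14 |\xi|^{-2}$ into exactly the $\dot H^{-1}\times\dot H^1$ pairing — for the $s$-harmonic extension the analogous computation uses the known $t$-weighted identities referenced in the text (e.g. \cite{CS07,BC17}). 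The second is the telescoping estimate for the far-away piece $\varphi_2$: one splits $\R^n\setminus 2B$ into dyadic annuli $2^{k+1}B\setminus 2^k B$, bounds $|\varphi_{2^{k+1}B} - \varphi_{2^k B}| \aleq [\varphi]_{BMO}$ so that $|\varphi_{2^k B} - \varphi_{2B}|\aleq k\,[\varphi]_{BMO}$, and uses $|(\nabla_{x,t} p_t)(x-y)| \aleq t\,(t + |y-x_0|)^{-n-2}$ on $T(B)$ to sum $\sum_k k\, 2^{-k}$ against the annulus volumes; this is routine but must be written so the logarithmic factor is genuinely absorbed. Everything else follows the patterns already used in the excerpt for \eqref{eq:squaret} and the weighted trace estimates \eqref{eq:weightrace}.
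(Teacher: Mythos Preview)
The paper does not give its own proof of this theorem; it is simply quoted, with references to Stein \cite[IV, \S4.3, Theorem 3]{S93} and Grafakos \cite[Theorem 7.3.8]{GrafakosMF}. Your outline is precisely the classical argument found in those references: for $BMO \Rightarrow$ Carleson, the near/far splitting of $\varphi-\varphi_B$, the $L^2$ square-function bound plus John--Nirenberg for the local piece, and the dyadic-annulus telescoping with kernel decay for the far piece; for the converse, the reproducing identity $\int \varphi\, a = c\int_{\R^{n+1}_+} t\,\nabla_{x,t}\Phi\cdot\nabla_{x,t}A$ and $\mathcal H^1$--$BMO$ duality. Two small corrections on the converse step. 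First, the Cauchy--Schwarz as you wrote it --- restricting the $\Phi$-integral to $T(2B)$ --- is not immediate, since $\nabla_{x,t}A$ is not literally supported in a tent; the clean route is exactly the ``$L^1$--$L^\infty$ replacement'' inequality that the paper records immediately \emph{after} this theorem (pairing the Carleson norm of $t|\nabla_{x,t}\Phi|^2\,dx\,dt$ against the nontangential area function of $a$, whose $L^1$-norm is $\aeq \|a\|_{\mathcal H^1}\leq 1$). Second, in your displayed chain the exponent on $\|a\|_{L^2}$ should be $1$, not $\tfrac12$, and the trailing $|B|^{1/2}$ should be removed; with $\|a\|_{L^2}\leq |B|^{-1/2}$ the bound then closes to $\|\nu\|_{\mathrm{Carleson}}^{1/2}$ as intended.
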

Also, we need the following estimate which serves as a replacement for the $L^1$-$L^\infty$ H\"older inequality on $\R^{n+1}_+$. 
\[
 \int_{\R^{n+1}_+} t\, F(x,t)\, G(x,t)\, dx\, dt\aleq
 \]
 \[\sup_{B \subset \R^n \mbox{ balls} } \brac{|B|^{-1} \int_{T(B)} t| F(y,t)|^2 dy\, dt}^{\frac{1}{2}}\ \int_{\R^n} \brac{\int_{|x-y|<t} |G(y,t)|^2 \frac{dy dt}{t^{n-1}} }^{\frac{1}{2}}dx
 \]
For a proof, see \cite[IV,§4.4, Proposition, p. 162]{S93}.
In particular, if $\Phi: \R^{n+1}_+ \to \R$ is the harmonic extension of $\varphi: \R^n \to \R$ we have 
\begin{equation}\label{eq:jacob1}
 \int_{\R^{n+1}_+} t\, |\nabla_{x,t} \Phi|\, |G(x,t)|\, dx\, dt\aleq
[\varphi]_{BMO} \int_{\R^n} \brac{\int_{|x-y|<t} |G(y,t)|^2 \frac{dy dt}{t^{n-1}} }^{\frac{1}{2}}dx
 \end{equation}
In our situation \eqref{eq:goalclms2} we employ this estimate with
\[
G_i = |\nabla_{x,t} U^1| \ldots  |\nabla_{x,t} \nabla_x U^i| \ldots |\nabla_{x,t} U^n| 
\]
Moreover one can show \cite[(10.3)]{LS16} that if $U: \R^{n+1}_+ \to \R$ is the harmonic extension of $u \in C_c^\infty(\R^n)$, then for all $x \in \R^n$,
\begin{equation}\label{eq:maxfctest}
\sup_{(y,t): |x-y| < t} |\nabla_{x,t} U(y,t)| \aleq \mathcal{M} |\nabla u|(x) + \mathcal{M} |\laps{1} u|(x)
\end{equation}
Here $\mathcal{M}$ is the Hardy-Littlewood maximal function.
Thus,
\[
\begin{split}
&\int_{\R^n} \brac{\int_{|x-y|<t} |G_1(y,t)|^2 \frac{dy dt}{t^{n-1}} }^{\frac{1}{2}}dx\\
\leq &\sum_{D_i \in \{\laph, \nabla_x\}}\int_{\R^n} \mathcal{M} |D_2 u^2|\, \ldots \mathcal{M} |D_n u^n|\,  \brac{\int_{|x-y|<t} |\nabla_{x,t} \nabla_x U^1(y,t)|^2 \frac{dy dt}{t^{n-1}} }^{\frac{1}{2}}dx.\\
\end{split}
\]
H\"older's inequality and the boundedness of maximal functions and Riesz transforms on $L^p$ leads to 
\[
\begin{split}
&\int_{\R^n} \brac{\int_{|x-y|<t} |G_1(y,t)|^2 \frac{dy dt}{t^{n+1}} }^{\frac{1}{2}}dx\\
\leq & \|\nabla u^2\|_{L^{p_2}(\R^n)} \ldots \|\nabla u^n\|_{L^{p_n}(\R^n)}
\brac{\int_{\R^n} \brac{\int_{|x-y|<t} |\nabla_{x,t} \nabla_x U^1(y,t)|^2 \frac{dy dt}{t^{n-1}} }^{\frac{p_1}{2}}dx}^{\frac{1}{p_1}}
\end{split}
\]
Now, we use the non-tangential square function estimate, see \cite[section I, §8.23, p.46]{S93}, which states that for $p \in (1,\infty)$,
\begin{equation}\label{eq:ntsquare}
\brac{\int_{\R^n} \brac{\int_{|x-y| < t} |\kappa_t \ast f(y)|^2\, \frac{dt\, dy}{t^{n+1}}}^p \, dx}^{\frac{1}{p}} \aleq \|f\|_{L^p(\R^n)}
\end{equation}
hold true for kernels $\kappa$ with sufficient decay at infinity and $\int \kappa = 0$. Since $U^1$ is harmonic with decay to zero at infinity, it can be written as $U^1 = p_t \ast u^1$, where $p_t$ is the Poisson kernel as in \eqref{eq:harmext}. Consequently,
\[
\nabla_{x,t} \nabla_x U^1 = (\nabla_{x,t} p_t) \ast \nabla_x u^1,
\]
and just by computing $\nabla_{x,t} p_t$ we can find a map $\kappa$ with sufficient decay (and since it is a derivative with $\int \kappa = 0$) so that 
\[
\nabla_{x,t} p_t(z) = t^{-1} t^{-n}\kappa(z/t) \equiv t^{-1} \kappa_t(z)
\]
Thus,
\[
\begin{split}
&\brac{\int_{\R^n} \brac{\int_{|x-y|<t} |\nabla_{x,t} \nabla_x U^1(y,t)|^2 \frac{dy dt}{t^{n-1}} }^{\frac{p_1}{2}}dx}^{\frac{1}{p_1}}\\
=&\brac{\int_{\R^n} \brac{\int_{|x-y|<t} t^{-2} |\kappa_t \ast \nabla_x u^1(y,t)|^2 \frac{dy dt}{t^{n-1}} }^{\frac{p_1}{2}}dx}^{\frac{1}{p_1}}\\
=&\brac{\int_{\R^n} \brac{\int_{|x-y|<t}  |\kappa_t \ast \nabla_x u^1(y,t)|^2 \frac{dy dt}{t^{n+1}} }^{\frac{p_1}{2}}dx}^{\frac{1}{p_1}}\\
\overset{\eqref{eq:ntsquare}}{\aleq}& \|\nabla_x u^1\|_{L^{p_1}(\R^n)}.
\end{split}
\]
Plugging all these estimates together, we obtain \eqref{eq:clmsp}
\subsection{The actual div-curl estimate}
The theorem by Coifman-Lions-Meyer-Semmes \cite{CLMS93} actually treats div-curl estimates, namely for vectorfields $f: \R^n \to \R^n$, $g: \R^n \to \R^n$ so that 
\[
\div f = 0, \quad \curl g =0
\]
we have for any $p \in (1,\infty)$
\[
\int_{\R^n} f \cdot g\, \varphi \aleq \|f\|_{L^{p}(\R^n)}\, \|g\|_{L^{p'}(\R^n)}\, [\varphi]_{BMO}\\
\]
One can easily obtain the same estimate (and the related intermediate estimates) by the same method as above if one represents (by the Poincar\`e-Lemma) the vector fields as differential forms $f = d\alpha \in C_c^\infty(\Ep^1 \R^n)$, $g = \ast d\beta \in C_c^\infty(\Ep^{n-1} \R^n)$
\[
\left |\int_{\R^n} f \cdot g\, \varphi \right |\equiv \left |\int_{\R^n} da \wedge db\, \varphi\right |
\]
Now the Stokes theorem implies for extensions $A, B, \Phi$ of $a,b,\varphi$ respectively that
\[
\left |\int_{\R^n} da \wedge db\, \varphi \right |= \left |\int_{\R^{n+1}_+} dA \wedge dB\wedge  d\Phi \right |
\] 
The further estimates are exactly as in the above sections.

Let us remark, that an argument based on the harmonic extension argument has been used by Chanillo in \cite{Chanillo91} quite some time ago in the realm of compensated compactness. In particular, for div-curl quantities he obtained estimates of the form \eqref{eq:clmsbmoemb} in this way. The BMO-estimate via this argument seems to be new in \cite{LS16}.

\section{Coifman-Rochberg-Weiss Commutator}\label{ch:CRW}
Let $\Rz = (\Rz_1,\ldots,\Rz_n)$ denote the (vectorial) Riesz transform, given by the Fourier symbol
\[
 \mathcal{F}(\Rz f)(\xi) := c\, i\frac{\xi}{|\xi|}\, \mathcal{F}f(\xi).
\]
where $c$ is a real nonzero number. In \cite{CRW76} Coifman-Rochberg-Weiss proved the following\footnote{Indeed, they proved this estimate for general Calderon-Zygmund operators instead of only the Riesz transforms} estimate for any $p \in (1,\infty)$
\begin{equation}\label{eq:CRW}
 \int_{\R^n} \brac{\Rz_i(f)\, g + f\, \Rz_i(g)}\, \varphi \aleq [\varphi]_{BMO}\, \|f\|_{L^p(\R^n)}\, \|g\|_{L^{p'}(\R^n)}.
\end{equation}
This is a commutator estimate, since it can by duality it is equivalent to
\[
 \|[\varphi,\Rz_i](f)\|_{L^p(R^n)} \aleq [\varphi]_{BMO}\, \|f\|_{L^p(\R^n)},
\]
where
\[
 [\varphi,\Rz](f) = \varphi \Rz_i(f) - \Rz_i(\varphi f).
\]
As for the Jacobian, \eqref{eq:CRW} is an improvement of the (almost) trivial estimate 
\begin{equation}\label{eq:CRWstupid}
 \int_{\R^n} \brac{\Rz_i(f)\, g + f\, \Rz_i(g)}\, \varphi \aleq \|\varphi\|_{L^\infty}\, \|f\|_{L^p(\R^n)}\, \|g\|_{L^{p'}(\R^n)}.
\end{equation}
Indeed, it was shown in \cite{CLMS93} that the Jacobian estimate from section~\ref{ch:clms} follows from \eqref{eq:CRW}.
In \cite{LS16} the estimate \eqref{eq:CRW} is proven by the extension method, namely we obtain
\begin{theorem}
Let $f,g,\varphi \in C_c^\infty(\R^n)$, $i=1,\ldots,n$.
The term
\[
 \int_{\R^n} \brac{\Rz_i(f)\, g + f\, \Rz_i(g)}\, \varphi 
\]
can be estimated by
\begin{enumerate}
 \item The Coifman-Rochberg-Weiss \cite{CRW76} estimate, for any $p \in (1,\infty)$ 
 \[[\varphi]_{BMO}\, \|f\|_{L^p(\R^n)}\, \|g\|_{L^{p'}(\R^n)},\]
 \item For any $s \in (0,1)$ and $p_1,p_2,p_3 \in (1,\infty)$
 \[
 \|\laps{s} \varphi \|_{L^{p_1}(\R^n)} \|\lapms{s} f\|_{L^{p_2}(\R^n)}\, \|g\|_{L^{p_3}(\R^n)}.
 \]
\end{enumerate}

\end{theorem}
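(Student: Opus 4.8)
The plan is to follow the harmonic extension method of \cite{LS16}, running exactly the three–step scheme of Section~\ref{ch:clms}: an integration by parts turning the boundary integral into a bulk integral on $\R^{n+1}_+$, harmonic‑extension identities to redistribute derivatives, and trace estimates (Carleson measures for part (1), weighted traces for part (2)). Throughout, let $F,G,\Phi$ be the harmonic extensions of $f,g,\varphi$, and for $j=1,\dots,n$ let $F^{(j)},G^{(j)}$ be the harmonic extensions of $\Rz_j f,\Rz_j g$. The one new ingredient compared with Section~\ref{ch:clms} is that, on the extension side, the Riesz transform corresponds to passing to a \emph{conjugate harmonic function}: $(F,F^{(1)},\dots,F^{(n)})$ solves the generalized Cauchy--Riemann system, so, up to the universal constant in the symbol of $\Rz$,
\[
\partial_t F^{(j)}=\partial_{x^j}F,\qquad \partial_{x^j}F^{(k)}=\partial_{x^k}F^{(j)},\qquad \partial_t F=-\sum_{j=1}^n\partial_{x^j}F^{(j)},
\]
and likewise for $G$ (and for $\Phi$, using that $\Rz_j$ is bounded on $BMO$). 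The effect is that \emph{every} $t$‑derivative of one of these extensions can be rewritten as a sum of $x$‑derivatives of (possibly conjugate) extensions, and $x$‑derivatives may be integrated by parts on $\R^{n+1}_+$ without boundary terms.

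For part (1) I would first write
\[
\int_{\R^n}\brac{\Rz_i f\,g+f\,\Rz_i g}\varphi=\int_{\R^n}\brac{F^{(i)}G+FG^{(i)}}\Phi\Big|_{t=0}dx=-\int_{\R^{n+1}_+}\partial_t\!\Big(\brac{F^{(i)}G+FG^{(i)}}\Phi\Big)dx\,dt
\]
by the fundamental theorem of calculus in $t$ (all extensions and derivatives decay at $t=\infty$ for $C_c^\infty$ data), then integrate by parts once more in $t$ as in \eqref{eq:pit}, producing the weight $t$ and a second $t$‑derivative: $\mathcal C=\int_{\R^{n+1}_+}t\,\partial_{tt}\big((F^{(i)}G+FG^{(i)})\Phi\big)$. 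Expanding $\partial_{tt}$ of this triple product, using harmonicity ($\partial_{tt}=-\Delta_x$ on each harmonic extension) to convert surviving pure $t$‑derivatives into $x$‑derivatives, using the Cauchy--Riemann identities above, and integrating by parts in $x$ to push one $x$‑derivative onto $\Phi$, one should arrive at a finite sum of Jacobian‑type bulk integrals
\[
\mathcal C\aleq\int_{\R^{n+1}_+}t\,|\nabla_{x,t}\Phi|\,\big(|\nabla_{x,t}F^{(j)}|\,|G^{(k)}|+|F^{(j)}|\,|\nabla_{x,t}G^{(k)}|\big)\,dx\,dt
\]
(sum over the relevant $j,k$, with $F^{(0)}:=F$, $G^{(0)}:=G$) — the analogue of \eqref{eq:goalclms2}: one factor carries $\nabla_{x,t}\Phi$, exactly one of the $f$‑/$g$‑extensions carries one derivative, the remaining one is undifferentiated. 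From here the estimate closes exactly as in Section~\ref{s:traceestjacob}: the $BMO$--Carleson characterization applied to the $\nabla_{x,t}\Phi$ factor produces $[\varphi]_{BMO}$ together with the tent‑space factor $\int_{\R^n}\big(\int_{|x-y|<t}|\cdots|^2\,\tfrac{dy\,dt}{t^{n-1}}\big)^{1/2}dx$; a pointwise nontangential maximal estimate (cf. \eqref{eq:maxfctest}) controls the undifferentiated extension by a Hardy--Littlewood maximal function; and since $\nabla_{x,t}F^{(j)}=t^{-1}\kappa_t*\Rz_j f$ with $\int\kappa=0$, the nontangential square‑function estimate \eqref{eq:ntsquare} controls the remaining factor by $\|\Rz_j f\|_{L^p}\aleq\|f\|_{L^p}$. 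Hölder in $x$ with exponents $p,p'$ and boundedness of $\Rz_j$ on $L^p$ give $[\varphi]_{BMO}\,\|f\|_{L^p}\,\|g\|_{L^{p'}}$, i.e. (1).

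Part (2) I expect to be genuinely easier — the analogue of the ``intermediate'' estimates of Section~\ref{ch:clms}: one needs only the first integration by parts, Hölder with $t$‑weights, and the weighted trace identity \eqref{eq:weightrace} (no second $t$‑integration by parts, no harmonicity, no Carleson measures). After the first step one has $\mathcal C=-\int_{\R^{n+1}_+}\partial_t\big((F^{(i)}G+FG^{(i)})\Phi\big)$; expanding the $t$‑derivative yields products of three extensions with exactly one $t$‑derivative, and inserting $1=t^{a_1}t^{a_2}t^{a_3}$ with $a_1+a_2+a_3=0$ chosen to match the Sobolev scalings, applying Hölder in $(x,t)$ with exponents $p_1,p_2,p_3$ (which must satisfy the scaling‑forced relation $\tfrac1{p_1}+\tfrac1{p_2}+\tfrac1{p_3}=1$, implicit in the statement), and using the weighted/square‑function trace characterizations of the relevant Bessel‑potential (Triebel--Lizorkin) spaces for the harmonic extension (cf. \eqref{eq:weightrace} and \cite{BC17}) reduces the three factors to $\|\laps{s}\varphi\|_{L^{p_1}}$, $\|\lapms{s}f\|_{L^{p_2}}$ (using that $\Rz_j$ commutes with $\lapms{s}$ and is $L^{p_2}$‑bounded), and $\|g\|_{L^{p_3}}$; one checks the weight bookkeeping is consistent because in each term the differential order $s$ on $\varphi$, $-s$ on $f$, $0$ on $g$ and the one $\partial_t$ balance. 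Alternatively, part (2) follows from Tartar's Fourier‑side argument: the trilinear symbol $i\big(\tfrac{\eta_i}{|\eta|}+\tfrac{\zeta_i}{|\zeta|}\big)$ attached to $\Rz_i f\,g+f\,\Rz_i g$ satisfies on $\{\eta+\zeta+\xi=0\}$ the Coifman--Meyer bound $|\eta|^{s}|\xi|^{-s}\,\big|\tfrac{\eta_i}{|\eta|}+\tfrac{\zeta_i}{|\zeta|}\big|\aleq 1$, using $\big|\tfrac{\eta}{|\eta|}+\tfrac{\zeta}{|\zeta|}\big|\aleq |\xi|/\max(|\eta|,|\zeta|)$ and $|\xi|\le 2\max(|\eta|,|\zeta|)$, so the estimate is a consequence of the multilinear multiplier theorem.

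The main obstacle I expect is the bookkeeping in part (1). Unlike in the Jacobian case, after the first integration by parts there is no determinant to organize the cancellations, so one must verify by hand that, after expanding $t\,\partial_{tt}\big((F^{(i)}G+FG^{(i)})\Phi\big)$, applying harmonicity and the Cauchy--Riemann identities, and integrating by parts in $x$, \emph{every} resulting term ends up with a differentiated $\Phi$ (so that the Carleson estimate applies) and with at most one derivative on the $f$‑ and $g$‑extensions combined. The dangerous terms are those of the shape $\int_{\R^{n+1}_+}t\,\nabla_x F^{(j)}\cdot\nabla_x G^{(k)}\,\Phi\,dx\,dt$ with $\Phi$ undifferentiated (which do arise, e.g. from the $\partial_{tt}F^{(i)}\cdot G\cdot\Phi$ piece, and are not removed by a single further integration by parts); following \cite{LS16} one must show that these either cancel in pairs or can be reabsorbed into admissible terms. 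It is precisely at this point — and essentially only here — that the cancellation in the combination $\Rz_i f\,g+f\,\Rz_i g$, i.e. the commutator structure that makes the left‑hand side meaningful for $\varphi$ merely in $BMO$, is genuinely used; this step is the analogue of the proof of \eqref{eq:goalclms}.
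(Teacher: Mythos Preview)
Your plan for part~(1) is essentially the paper's argument: two $t$--integrations by parts producing the weight $t$, harmonicity and the identity $\partial_t(\Rz_i f)^h=-c\,\partial_{x_i}f^h$ (your Cauchy--Riemann relations) to turn every surviving term into an $x$--divergence, and then Carleson/square--function trace estimates. You have also correctly located the crux, namely showing that \emph{every} term in the expansion of $t\,\partial_{tt}\big((F^{(i)}G+FG^{(i)})\Phi\big)$ ends up with $\nabla_{x,t}\Phi$; the paper carries out precisely this computation and arrives at the bulk bound $\sum\int_{\R^{n+1}_+}t\,(|\nabla_{x,t}\tilde F|\,|\tilde G|+|\tilde F|\,|\nabla_{x,t}\tilde G|)\,|\nabla_{x,t}\Phi|$.

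For part~(2), however, your primary route (one $t$--integration by parts, then H\"older with $t$--weights and the trace identity \eqref{eq:weightrace}) has a gap. The analogy with the Jacobian ``intermediate'' estimates is misleading: there, Stokes' theorem automatically places a derivative on $\Phi$ in every term. Here the first step is just the product rule for $\partial_t\big((F^{(i)}G+FG^{(i)})\Phi\big)$, so four of the six resulting terms carry an \emph{undifferentiated} $\Phi$. For such a factor you cannot extract $\|\laps{s}\varphi\|_{L^{p_1}}$ by square functions: writing $\Phi=p_t\ast\varphi=t^{s}\,\eta_t\ast\laps{s}\varphi$ forces $\hat\eta(\xi)=|\xi|^{-s}e^{-|\xi|}$, which does not vanish at the origin, so the square--function estimate \eqref{eq:squaret} is unavailable. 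Two of those four bad terms combine via your Cauchy--Riemann identities into an $x$--divergence and are fine after an $x$--integration by parts, but the remaining pair $F^{(i)}\,\partial_t G\,\Phi+\partial_t F\,G^{(i)}\,\Phi$ does not, and eliminating it requires exactly the second $t$--integration by parts together with the commutator cancellation --- i.e.\ the same work you already do for part~(1). The paper accordingly does \emph{not} take a shortcut for~(2): it uses the same bulk inequality \eqref{eq:CRWgoal} for both parts and only changes the trace lemma, obtaining $\|\laps{s}\varphi\|_{L^{p_1}}\|\lapms{s}f\|_{L^{p_2}}\|g\|_{L^{p_3}}$ from $\int_{\R^{n+1}_+}t\,|\nabla_{x,t}F|\,|G|\,|\nabla_{x,t}\Phi|$ via the maximal estimate $\sup_t t|\nabla_{x,t}F|\aleq\mathcal M f$ and weighted square functions. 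Your Fourier/Tartar alternative for~(2) is valid, but it is a genuinely different argument from the paper's.
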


The main additional observation in addition to the arguments from section~\ref{ch:clms} is the following:

for a map $f: \R^n \to \R$ denote $f^h: \R^{n+1}_+ \to \R$ the harmonic extension $f^h(x,t) := p_t \ast f(x)$ for the Poisson kernel \eqref{eq:poissonkernel}. Then we have the following for some constant $c \in \R$
\begin{equation}\label{eq:ptrz1}
 \partial_t \brac{\Rz_i f}^h = -c\, \partial_{x_i} f^h,
\end{equation}

\subsection{The integration by parts}
We use the following formula which holds e.g. for any $C^1$-function $\eta : [0,\infty) \to \R$ with sufficient decay at infinity, namely $\lim_{t \to \infty} |\eta(t)| = \lim_{t \to \infty} |\eta'(t)| = 0$:
\[
 \eta(0) = \int_0^\infty t\, \partial_{tt} \eta(t)\, dt.
\]
Let $F, G, \Phi: \R^{n+1}_+$ be the harmonic extensions of $f,g,\varphi: \R^n \to \R$. By an abuse of notation we shall write
\[
 \Rz_i F := (\Rz_i f)^h.
\]
Then we find
\[
\begin{split}
  &\left |\int_{\R^n} \brac{\Rz_i(f)\, g + f\, \Rz_i(g)}\, \varphi \right |\\
  =&\left |\int_{\R^{n+1}_+} t \partial_{tt}\brac{\brac{\Rz_i(F)\, G + F\, \Rz_i(G)}\, \Phi} \right | 
\end{split}
  \]
Our goal is to show at least one of these derivatives hits $\Phi$,
\begin{equation}\label{eq:CRWgoal}
\begin{split}
  &\left |\int_{\R^n} \brac{\Rz_i(f)\, g + f\, \Rz_i(g)}\, \varphi \right | \\
  \aleq & \sum \int_{\R^{n+1}_+} t \brac{|\nabla_{x,t} \tilde{F}|\, |\tilde{G}| + |\tilde{F}|\, |\nabla_{x,t} \tilde{G}|} |\nabla_{x,t} \Phi|
\end{split}
  \end{equation}
where the sum is over $\tilde{F} \in \{ \Rz F, F \}$ and $\tilde{G} \in \{ \Rz G,G\}$.

We compute
\begin{equation}\label{eq:crw:split}
 \begin{split}
 &\partial_{tt}\brac{\brac{\Rz_i(F)\, G + F\, \Rz_i(G)}\, \Phi}\\
 =& \partial_t\brac{\Rz_i(F)\, G + F\, \Rz_i(G)}\, \partial_{t}\Phi\\
 &+\brac{\Rz_i(F)\, G + F\, \Rz_i(G)}\, \partial_{tt}\Phi\\
 &+ \partial_{tt}\brac{\Rz_i(F)\, G + F\, \Rz_i(G)}\, \Phi 
 \end{split}
\end{equation}
Clearly the first term is already of the form we need to get \eqref{eq:CRWgoal}. As for the second term, we can use the harmonicity of $\Phi$,
\[
 \brac{\Rz_i(F)\, G + F\, \Rz_i(G)}\, \partial_{tt}\Phi = -\brac{\Rz_i(F)\, G + F\, \Rz_i(G)}\, \lap_x \Phi
\]
and thus by an integration by parts in $x$-direction
\[
 \int_{\R^{n+1}_+} t \brac{\Rz_i(F)\, G + F\, \Rz_i(G)}\, \partial_{tt}\Phi  = \int_{\R^{n+1}_+} t \nabla_x \brac{\Rz_i(F)\, G + F\, \Rz_i(G)}\, \nabla_x\Phi 
\]
which is of the form \eqref{eq:CRWgoal}.

It remains to compute the last term in \eqref{eq:crw:split}. We have
\[
\begin{split}
  &\partial_{tt}\brac{\Rz_i(F)\, G + F\, \Rz_i(G)}\\
  =& \brac{\partial_{tt}\Rz_i(F)\, G + F\, \partial_{tt}\Rz_i(G)}\\
  &+\brac{\Rz_i(F)\, \partial_{tt}G + \partial_{tt}F\, \Rz_i(G)}\\
  &+2\brac{\partial_t \Rz_i(F)\, \partial_{t}G + \partial_{t}F\, \partial_t \Rz_i(G)}.
\end{split}
  \]
Now we employ \eqref{eq:ptrz1} (we pretend $c =1$ for simplicity).
\[
\begin{split}
  =& \brac{\partial_{tt}\Rz_i(F)\, G + F\, \partial_{tt}\Rz_i(G)}\\
  &+\brac{\Rz_i(F)\, \partial_{tt}G + \partial_{tt}F\, \Rz_i(G)}\\
  &-2\brac{\partial_{x^i} F\, \partial_{t}G + \partial_{t}F\, \partial_{x^i} G}\\
\end{split}
\]
and use the product rule on the last term (factoring $\partial_{x_i}$)
\[
  \begin{split}
  =& \brac{\partial_{tt}\Rz_i(F)\, G + F\, \partial_{tt}\Rz_i(G)}\\
  &+\brac{\Rz_i(F)\, \partial_{tt}G + \partial_{tt}F\, \Rz_i(G)}\\
  &-2\partial_{x^i}\brac{F\, \partial_{t}G + \partial_{t}F\, G}\\
  &+2\brac{F\, \partial_{t}\partial_{x^i}G + \partial_{t}\partial_{x^i}F\, G}\\
\end{split}
  \]
and again by \eqref{eq:ptrz1} we find
\[
  \begin{split}
  =& \brac{\partial_{tt}\Rz_i(F)\, G + F\, \partial_{tt}\Rz_i(G)}\\
  &+\brac{\Rz_i(F)\, \partial_{tt}G + \partial_{tt}F\, \Rz_i(G)}\\
  &-2\partial_{x^i}\brac{F\, \partial_{t}G + \partial_{t}F\, G}\\
  &-2\brac{F\, \partial_{tt}\Rz_i G + \partial_{tt}\Rz_i F\, G}\\
\end{split}
  \]
and thus
\[
  \begin{split}
  =& -\brac{\partial_{tt}\Rz_i(F)\, G + F\, \partial_{tt}\Rz_i(G)}\\
  &+\brac{\Rz_i(F)\, \partial_{tt}G + \partial_{tt}F\, \Rz_i(G)}\\
  &-2\partial_{x^i}\brac{F\, \partial_{t}G + \partial_{t}F\, G}.
\end{split}
  \]
Now we use the harmonicity of $F$ and $G$ (and recall that $\Rz_i F$ and $\Rz_i G$ are by definition also harmonic),
\[
  \begin{split}
  =& +\brac{\lap_x \Rz_i(F)\, G + F\, \lap_x \Rz_i(G)}\\
  &-\brac{\Rz_i(F)\, \lap_x G + \lap_x F\, \Rz_i(G)}\\
  &-2\partial_{x^i}\brac{F\, \partial_{t}G + \partial_{t}F\, G}.
\end{split}
  \]
Next we factor the divergence
\[
  \begin{split}
  =& +\nabla_x \cdot \brac{\nabla_x \Rz_i(F)\, G + F\, \nabla_x \Rz_i(G)}\\
  & -\brac{\nabla_x \Rz_i(F)\cdot \nabla_x G + \nabla_x F\cdot \nabla_x \Rz_i(G)}\\
  &-\nabla_x \cdot \brac{\Rz_i(F)\, \nabla_x G + \nabla_x F\, \Rz_i(G)}\\
  &+\brac{\nabla_x\Rz_i(F)\cdot \nabla_x G + \nabla_x F\cdot \nabla_x \Rz_i(G)}\\
  &-2\partial_{x^i}\brac{F\, \partial_{t}G + \partial_{t}F\, G}\\
\end{split}
  \]
We see that the second and fourth row cancel, and thus
\[
  \begin{split}
  =& +\nabla_x \cdot \brac{\nabla_x \Rz_i(F)\, G + F\, \nabla_x \Rz_i(G)}\\
  &-\nabla_x \cdot \brac{\Rz_i(F)\, \nabla_x G + \nabla_x F\, \Rz_i(G)}\\
  &-2\partial_{x^i}\brac{F\, \partial_{t}G + \partial_{t}F\, G}\\
\end{split}
  \]
But this implies that also for the third term in \eqref{eq:crw:split} we obtain the estimate \eqref{eq:CRWgoal} by an integration by parts.
\subsection{The trace theorems}
We have found in \eqref{eq:CRWgoal}
\[
\begin{split}
  &\left |\int_{\R^n} \brac{\Rz_i(f)\, g + f\, \Rz_i(g)}\, \varphi \right | \\
  \aleq & \sum \int_{\R^{n+1}_+} t \brac{|\nabla_{x,t} \tilde{F}|\, |\tilde{G}| + |\tilde{F}|\, |\nabla_{x,t} \tilde{G}|} |\nabla_{x,t} \Phi|
\end{split}
  \]
where the sum is over $\tilde{F} \in \{ \Rz F, F \}$ and $\tilde{G} \in \{ \Rz G,G\}$.
Now we need to prove trace estimates
\begin{lemma}
Let $F, G, \Phi: \R^{n+1}_+ \to \R$ be the harmonic extensions of $f,g,\varphi : \R^n \to \R$, respectively. Then
\[
 \int_{\R^{n+1}_+} t |\nabla_{x,t} F|\, |G| |\nabla_{x,t} \Phi|
\]
can be estimated by
\begin{enumerate}
\item for $p \in (1,\infty)$ \begin{equation}\label{eq:CRW:tracebmo}
\aleq [\varphi]_{BMO} \|f\|_{L^p}\, \|g\|_{L^{p'}}.
\end{equation}
\item for $p_1,p_2, p_3$ in $(1,\infty)$ with $\frac{1}{p_1} +\frac{1}{p_2}+\frac{1}{p_3}=1$
\begin{equation}\label{eq:CRW:tracei1}
\aleq \|\laps{s} \varphi\|_{L^{p_1}} \|f\|_{L^{p_2}}\, \|\lapms{s} g\|_{L^{p_3}}.
\end{equation}
\begin{equation}\label{eq:CRW:tracei2}
\aleq \|\laps{s} \varphi\|_{L^{p_1}} \|\lapms{s} f\|_{L^{p_2}}\, \|g\|_{L^{p_3}}.
\end{equation}
\end{enumerate}
\end{lemma}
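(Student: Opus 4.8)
The plan is to prove each of the three bounds by a trace estimate of the same shape as the one carried out for the Jacobian in Section~\ref{s:traceestjacob}. In all three cases one first rewrites the integral over $\R^{n+1}_+$ as an average of Lusin area integrals over cones $\{(y,t):|x-y|<t\}$,
\[
\int_{\R^{n+1}_+} H(y,t)\, dy\, dt \aeq \int_{\R^n} \brac{\int_{|x-y|<t} H(y,t)\, \frac{dy\, dt}{t^n}}\, dx,
\]
and then uses two classical facts. The first is the nontangential square function estimate \eqref{eq:ntsquare} together with its weighted version: for $1<p<\infty$, $|\sigma|<1$ and a kernel $\kappa$ with $\int\kappa=0$ and sufficient decay (such as $\kappa=\nabla_{x,t}p_1$, which decays like $|z|^{-n-1}$),
\[
\vrac{x \mapsto \brac{\int_{|x-y|<t} \abs{\kappa_t \ast h(y)}^2\, \frac{dy\, dt}{t^{n+1+2\sigma}}}^{\frac 12}}_{L^p(\R^n)} \aeq \vrac{\laps{\sigma} h}_{L^p(\R^n)},
\]
which is part of the ($s$-)harmonic extension theory, see \cite{S93,CS07,BC17,LS16}. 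The second is the nontangential maximal estimate $\sup_{|x-y|<t}\abs{\kappa_t \ast h(y)} \aleq \mathcal{M} h(x)$, valid for any $\kappa$ dominated by a radially decreasing $L^1$-function, in particular for $p_1$ and for $\nabla_{x,t}p_1$. The only elementary inputs are the identities $\nabla_{x,t}F=t^{-1}(\nabla_{x,t}p_1)_t\ast f$ and, for $0<s<1$ (by moving the multiplier $\laps{s}$ onto the kernel),
\[
\nabla_{x,t}F = t^{-1-s}\,(\laps{s}\nabla_{x,t}p_1)_t \ast \lapms{s} f, \qquad G = p_t\ast g = t^{-s}\,(\laps{s}p_1)_t \ast \lapms{s} g,
\]
where for $s>0$ the kernels $\laps{s}\nabla_{x,t}p_1$ and $\laps{s}p_1$ again have vanishing integral, with decay $|z|^{-n-1-s}$, resp. $|z|^{-n-s}$.

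For the $BMO$ bound \eqref{eq:CRW:tracebmo} I would essentially repeat Section~\ref{s:traceestjacob}: apply the replacement for the $L^1$--$L^\infty$ H\"older inequality on $\R^{n+1}_+$ of \cite[IV, \S 4.4]{S93} used there, with $\nabla_{x,t}\Phi$ in the Carleson slot --- which contributes $[\varphi]_{BMO}$ by the Carleson measure characterisation of $BMO$ quoted above --- and $\abs{\nabla_{x,t}F}\,\abs{G}$ in the Lusin area slot. In the area slot, bound $\abs{G(y,t)}=\abs{p_t\ast g(y)}\aleq\mathcal{M}g(x)$ for $(y,t)$ in the cone over $x$, pull $\mathcal{M}g(x)$ out, use H\"older in $x$ with exponents $p'$ and $p$ and $\vrac{\mathcal{M}g}_{L^{p'}}\aleq\vrac{g}_{L^{p'}}$, and then substitute $\nabla_{x,t}F=t^{-1}(\nabla_{x,t}p_1)_t\ast f$ so that $\int_{|x-y|<t}\abs{\nabla_{x,t}F}^2\,\frac{dy\,dt}{t^{n-1}}=\int_{|x-y|<t}\abs{(\nabla_{x,t}p_1)_t\ast f}^2\,\frac{dy\,dt}{t^{n+1}}$ is controlled in $L^p$ by $\vrac{f}_{L^p}$ via \eqref{eq:ntsquare} applied directly to $f$. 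Multiplying the three contributions gives \eqref{eq:CRW:tracebmo}.

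For the two intermediate bounds I would pass to the cone average, substitute the kernel identities, collect the surplus power $t^{-1-s}$, split it among the three factors and apply H\"older on the cone with exponents $(\infty,2,2)$. For \eqref{eq:CRW:tracei1} the cone integrand becomes $t^{-1-s}\abs{(\nabla_{x,t}p_1)_t\ast f}\,\abs{(\laps{s}p_1)_t\ast\lapms{s}g}\,\abs{(\nabla_{x,t}p_1)_t\ast\varphi}$; distributing $t^{-1-s}=t^{0}\cdot t^{-1/2}\cdot t^{-1/2-s}$, one takes the cone supremum of the first factor (bounded by $\mathcal{M}f(x)$), and recognises the remaining two cone integrals as the area functions characterising $\vrac{\lapms{s}g}_{L^{p_3}}$ (weight $\frac{dy\,dt}{t^{n+1}}$, i.e. $\sigma=0$) and $\vrac{\laps{s}\varphi}_{L^{p_1}}$ (weight $\frac{dy\,dt}{t^{n+1+2s}}$, i.e. $\sigma=s$); H\"older in $x$ with exponents $p_2,p_3,p_1$ then yields \eqref{eq:CRW:tracei1}. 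Estimate \eqref{eq:CRW:tracei2} is the mirror image: the cone integrand is $t^{-1-s}\abs{(\laps{s}\nabla_{x,t}p_1)_t\ast\lapms{s}f}\,\abs{p_t\ast g}\,\abs{(\nabla_{x,t}p_1)_t\ast\varphi}$, one now takes the cone supremum of the middle factor $p_t\ast g$ (again $\aleq\mathcal{M}g(x)$), distributes $t^{-1-s}=t^{-1/2}\cdot t^{0}\cdot t^{-1/2-s}$, and reads off $\vrac{\lapms{s}f}_{L^{p_2}}$ and $\vrac{\laps{s}\varphi}_{L^{p_1}}$ from the two remaining area integrals.

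The point that requires care is not any single step but the bookkeeping that makes the weights line up: after substituting the kernel representations one must check that the leftover power of $t$ splits so that each cone integral matches the exponent $\frac{dy\,dt}{t^{n+1+2\sigma}}$ with exactly the $\sigma$ demanded by the target norm ($\sigma=0$ for the plain $L^p$-factor, $\sigma=s$ for the others), and that the corresponding square function and nontangential maximal characterisations are really available --- this is where $s\in(0,1)$ enters, since $s>0$ is needed for the vanishing integral of $\laps{s}p_1$, $\laps{s}\nabla_{x,t}p_1$ and $s<1$ for their $|z|^{-n-s}$, $|z|^{-n-1-s}$ decay (see \cite{S93,CS07,BC17,LS16}). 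Finally, to feed these three estimates into \eqref{eq:CRWgoal} one needs the same bounds with $F$, $G$ replaced by $\Rz F,\Rz G$ (and, for the symmetric term $\abs{F}\,\abs{\nabla_{x,t}G}\,\abs{\nabla_{x,t}\Phi}$ in \eqref{eq:CRWgoal}, with the roles of $f$ and $g$ exchanged); but $\Rz F=(\Rz f)^h$, $\Rz G=(\Rz g)^h$ and the Riesz transform is bounded on every $L^p$, $1<p<\infty$, and commutes with $\laps{s}$, $\lapms{s}$, so these follow at once from the $F,G$ case.
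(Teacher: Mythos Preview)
Your proof is correct and uses the same ingredients as the paper: the Carleson-measure/area-integral duality for the BMO bound, and a combination of a maximal function estimate (for one factor) with square function estimates (for the other two) for the intermediate bounds. The only real difference is packaging. For \eqref{eq:CRW:tracei1} and \eqref{eq:CRW:tracei2} the paper never passes to cone averages: it works vertically, using $\sup_{t>0} t\,|\nabla_{x,t}F(x,t)|\aleq \mathcal{M}f(x)$ (resp. $\sup_{t>0}|G(x,t)|\aleq \mathcal{M}g(x)$), then splits $t = t^{s+\frac12}\cdot t^{\frac12-s}$ and applies H\"older in $t$ to land on the \emph{tangential} square functions $\brac{\int_0^\infty |\kappa_t\ast h|^2\,\tfrac{dt}{t}}^{1/2}$ of \eqref{eq:squaret}. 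You instead carry the cone average throughout and invoke the nontangential versions with the weighted measure $\frac{dy\,dt}{t^{n+1+2\sigma}}$. Both routes are valid; the paper's is marginally shorter (no Fubini-to-cones step and only the unweighted square function is needed), while yours has the virtue of treating all three estimates in a single framework. Your bookkeeping of the $t$-powers is correct, and your remark on why $s\in(0,1)$ is needed for the kernel conditions matches the paper's use of $s<1$ to ensure $\int\kappa=0$.
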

\begin{proof}
To prove \eqref{eq:CRW:tracebmo} we proceed the same way as in the $BMO$-estimate for the Jacobian, Section~\ref{s:traceestjacob}.

For \eqref{eq:CRW:tracei1} use a different version of the maximal function estimate \eqref{eq:maxfctest}, namely we have
\[
 \sup_{t > 0} t |\nabla_{x,t} F(x,t)| \aleq \mathcal{M} f(x).
\]
Thus, by H\"older's inequality
\[
\begin{split}
 &\int_{\R^{n+1}_+} t |\nabla_{x,t} F|\, |G| |\nabla_{x,t} \Phi|\\
 \aleq & \int_{\R^{n}_+} \mathcal{M} f(x)\, \brac{\int_0^\infty t^{2s-1} |G|^2\, dt}^{\frac{1}{2}} \brac{\int_0^\infty t^{1-2s} |\nabla_{x,t} \Phi|^2\, dt}^{\frac{1}{2}}\, dx\\
 \aleq & \|f\|_{L^{p_2}}\, \brac{\int_{\R^n} \brac{\int_0^\infty t^{2s-1} |G|^2\, dt}^{\frac{p_3}{2}}\, dx}^{\frac{1}{p_3}} 
 \brac{\int_{\R^n} \brac{\int_0^\infty t^{1-2s}|\nabla_{x,t} \Phi|^2\, dt}^{\frac{p_1}{2}}\, dx}^{\frac{1}{p_1}}\\
\end{split}
 \]
Now we can write
\[
 \nabla_{x,t} \Phi = t^{s-1}\, \kappa \ast \laps{s} \varphi
\]
where $s < 1$ ensures that $\kappa$ satisfies $\int \kappa = 0$. Thus, we find again a square function estimate, as in \eqref{eq:squaret}, and have 
\[
 \brac{\int_{\R^n} \brac{\int_0^\infty t^{1-2s}|\nabla_{x,t} \Phi|^2\, dt}^{\frac{p_1}{2}}\, dx}^{\frac{1}{p_1}} = \brac{\int_{\R^n} \brac{\int_0^\infty |\kappa_t \ast \laps{s} \varphi|^2\, \frac{dt}{t}}^{\frac{p_1}{2}}\, dx}^{\frac{1}{p_1}} \aleq \|\laps{s} \varphi \|_{L^{p_1}}.
\]
As for $G$, we can write \[G = p_t \ast \laps{s} \lapms{s} g =: t^{-s} \kappa_t \ast \lapms{s} g\]
and use the same square function estimate to obtain 
\[
 \brac{\int_{\R^n} \brac{\int_0^\infty t^{2s-1} |G|^2\, dt}^{\frac{p_3}{2}}\, dx}^{\frac{1}{p_3}}
 = \brac{\int_{\R^n} \brac{\int_0^\infty |\kappa_t \ast \lapms{s} g|^2\, \frac{dt}{t}}^{\frac{p_3}{2}}\, dx}^{\frac{1}{p_3}} \aleq \|\lapms{s} g \|_{L^{p_3}}.
\]
This establishes \eqref{eq:CRW:tracei1}.

For \eqref{eq:CRW:tracei2} we argue similarly,
\[
\begin{split}
 &\int_{\R^{n+1}_+} t |\nabla_{x,t} F|\, |G| |\nabla_{x,t} \Phi|\\
 \aleq & \int_{\R^{n}_+} \mathcal{M} g(x)\, \brac{\int_0^\infty t^{2s+1} |\nabla_{x,t} F|^2\, dt}^{\frac{1}{2}} \brac{\int_0^\infty t^{1-2s} |\nabla_{x,t} \Phi|^2\, dt}^{\frac{1}{2}}\, dx\\
 \aleq & \|g\|_{L^{p_3}}\, \brac{\int_{\R^n} \brac{\int_0^\infty t^{2s+1} |\nabla_{x,t} F|^2\, dt}^{\frac{p_2}{2}}\, dx}^{\frac{1}{p_2}} 
 \brac{\int_{\R^n} \brac{\int_0^\infty t^{1-2s}|\nabla_{x,t} \Phi|^2\, dt}^{\frac{p_1}{2}}\, dx}^{\frac{1}{p_1}}\\
\end{split}
 \]
The term involving $\Phi$ is estimated as above, for $F$ we write 
\[
 \nabla_{x,t} F =: t^{-1-s} \kappa_t \ast \lapms{s} f,
\]
and have by the square function estimate 
\[
 \brac{\int_{\R^n} \brac{\int_0^\infty t^{2s+1} |\nabla_{x,t} F|^2\, dt}^{\frac{p_2}{2}}\, dx}^{\frac{1}{p_2}} 
 = \brac{\int_{\R^n} \brac{\int_0^\infty |\kappa_t \ast \lapms{s} f|^2\, \frac{dt}{t}}^{\frac{p_2}{2}}\, dx}^{\frac{1}{p_2}}  \aleq \|\lapms{s} f\|_{L^{p_2}}.
\]
This establishes \eqref{eq:CRW:tracei2}.
\end{proof}

\section{Coifman-McIntosh-Meyer and Kato-Ponce-Vega type estimates}
In the above section we estimated commutators in $L^p$-spaces. A class of commutator estimates usually called Coifman-McIntosh-Meyer or Kato-Ponce-Vega estimates \cite{Coifman-Meyer-1986,Coifman-McIntosh-Meyer-1982,Kato-Ponce-1988,Kenig-Ponce-Vega-1993} consider H\"older and Lipschitz-estimates.
In this section we show how this works by the extension method.
\begin{theorem}\label{th:KP}
Let $p \in (1,\infty)$ and $f,g, \varphi \in C_c^\infty(\R^n)$. Then,
\[
\| [\laph, \varphi](f)\|_{L^p(\R^n)} \aleq [\varphi]_{C^{0,1}} \|f\|_{L^p(\R^n)}
\]
or equivalently
\[
\int_{\R^n}  \brac{f\laph g - \laph  f\, g}\varphi  \aleq [\varphi]_{C^{0,1}} \|f\|_{L^p(\R^n)}\, \|g\|_{L^{p'}(\R^n)}.
\]
\end{theorem}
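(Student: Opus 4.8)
The plan is to run the harmonic-extension scheme of Section~\ref{ch:CRW} with $\Rz_i$ replaced by $\laph$. Let $F,G,\Phi\colon\R^{n+1}_+\to\R$ be the harmonic extensions of $f,g,\varphi$. The identity replacing \eqref{eq:ptrz1} is the Dirichlet-to-Neumann relation $-\partial_t p_t=\laph p_t$, which gives
\[
\laph f=-\,\partial_t F\big|_{t=0},\qquad\text{equivalently}\qquad \laph F=-\partial_t F
\]
with $\laph$ acting on the $x$-variable; in particular $\partial_t F$ is again harmonic, being the extension of $-\laph f$. Hence the Wronskian-type quantity $W:=(\partial_t F)\,G-F\,(\partial_t G)$ has boundary value $W|_{t=0}=f\laph g-\laph f\,g$, and the elementary identity $\eta(0)=\int_0^\infty t\,\partial_{tt}\eta\,dt$ applied to $\eta(t):=\int_{\R^n}W\Phi\,dx$ yields
\[
\int_{\R^n}\brac{f\laph g-\laph f\,g}\varphi
=\int_{\R^{n+1}_+}t\,\partial_{tt}\brac{W\Phi}\,dx\,dt .
\]

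I would then expand $\partial_{tt}(W\Phi)$ and, exactly as in Section~\ref{ch:CRW}, use harmonicity to turn every second $t$-derivative that lands on $F$, $G$ or $\Phi$ into $-\lap_x$ and integrate that by parts in the $x$-variable, where there are no boundary terms; the starting observation is that $\partial_t W=\dv_x(F\nabla_x G-G\nabla_x F)$ is already a spatial divergence. Iterating these moves, and using $\partial_t F=-\laph F$ to trade the remaining $t$-derivatives for spatial ones, the aim is to reach a bound of the same shape as \eqref{eq:CRWgoal},
\[
\abs{\int_{\R^n}\brac{f\laph g-\laph f\,g}\varphi}
\aleq \sum\int_{\R^{n+1}_+}t\,\abs{\nabla_x\Phi}\,\brac{\abs{\nabla_{x,t}\tilde F}\,\abs{\tilde G}+\abs{\tilde F}\,\abs{\nabla_{x,t}\tilde G}}\,dx\,dt ,
\]
the sum running over $\tilde F\in\{F,\Rz F\}$ and $\tilde G\in\{G,\Rz G\}$, possibly with a few terms that carry one extra power of $t$ against one extra spatial derivative (which is harmless). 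The essential point is that the derivative hitting $\Phi$ is a \emph{spatial} one.

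For the trace step I would use the pointwise bound
\[
\abs{\nabla_x\Phi(x,t)}=\abs{p_t\ast\nabla\varphi(x)}\le\|\nabla\varphi\|_{L^\infty}=[\varphi]_{C^{0,1}},
\]
which is exactly where the Lipschitz hypothesis enters and which cannot be relaxed to $\varphi\in BMO$: there is no pointwise bound on $\partial_t\Phi$ by $[\varphi]_{C^{0,1}}$, and the $BMO$-version of the theorem is in fact false. The remaining factor is then estimated as in Sections~\ref{s:traceestjacob} and~\ref{ch:CRW}: the undifferentiated factor is pulled out through the non-tangential maximal estimate $\sup_{|y-x|<t}|F(y,t)|\aleq\mathcal{M}f(x)$ (or $\sup_{t>0}t\,|\nabla_{x,t}F(x,t)|\aleq\mathcal{M}f(x)$ when it carries a derivative), while the differentiated factor is written as $t^{-1}\kappa_t\ast(\,\cdot\,)$ with $\int\kappa=0$ and controlled by the non-tangential square function estimate \eqref{eq:ntsquare}; Hölder's inequality in $x$ with exponents $p,p'$, together with the $L^p$-boundedness of $\mathcal{M}$ and of $\Rz$, then produces $[\varphi]_{C^{0,1}}\,\|f\|_{L^p}\,\|g\|_{L^{p'}}$. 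Duality, using the self-adjointness of $\laph$, gives the equivalent commutator formulation.

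The main obstacle is the bookkeeping in the second step: one has to arrange the integrations by parts so that \emph{every} surviving term has the balanced form above — no term in which two derivatives fall on $F$ (or on $G$) without enough compensating powers of $t$, and no term forcing a pointwise estimate of $\partial_t\Phi$. This is the same kind of juggling as in the CRW argument of Section~\ref{ch:CRW}, but a shade more delicate, because $\partial_t F=-(\laph f)^h$ costs a derivative and is not a free $t\leftrightarrow x$ exchange like \eqref{eq:ptrz1}, so harmonicity has to be invoked repeatedly to keep the derivative count and the $t$-weights in balance.
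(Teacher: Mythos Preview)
Your overall architecture is right: use the Dirichlet--to--Neumann relation to realize $f\laph g-\laph f\,g$ as the boundary value of a Wronskian, push into $\R^{n+1}_+$, integrate by parts until a \emph{spatial} derivative lands on $\Phi$, and then invoke $|\nabla_x\Phi|\le[\varphi]_{C^{0,1}}$. The identity $\partial_t W=\dv_x(F\nabla_xG-G\nabla_xF)$ is also correct and is exactly the cancellation the paper exploits.

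The gap is in your target form and, consequently, in the trace step. Your displayed goal has only \emph{two} derivatives distributed among $F,G,\Phi$ against the weight $t$; a scaling check (send $(x,t)\mapsto(\lambda x,\lambda t)$) shows that $\int_{\R^{n+1}_+} t\,|\nabla_{x,t}F|\,|G|\,dx\,dt$ scales like $\lambda^{-n-1}$ while $\|f\|_{L^p}\|g\|_{L^{p'}}$ scales like $\lambda^{-n}$, so no estimate of the type you write can hold. In fact, after expanding $\partial_{tt}(W\Phi)$ you necessarily get \emph{three} derivatives against $t$, and the actual goal (the paper's \eqref{eq:KPgoal}) splits into two kinds of terms:
\[
\int_{\R^{n+1}_+} t\,|\nabla_{x,t}F|\,|\nabla_{x,t}G|\,|\nabla_x\Phi|
\quad\text{and}\quad
\int_{\R^{n+1}_+} t\,\bigl(|F|\,|\nabla_{x,t}G|+|\nabla_{x,t}F|\,|G|\bigr)\,|\nabla_{x,t}\nabla_x\Phi|.
\]
The first type is handled exactly as you describe: pull out $|\nabla_x\Phi|\le[\varphi]_{C^{0,1}}$ and use tangential square functions on $F$ and $G$. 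But the second type is unavoidable---it comes from the cross term $2(\partial_tW)(\partial_t\Phi)$, where you must integrate the $x$-divergence by parts onto $\partial_t\Phi$ to avoid a bare $\partial_t\Phi$, producing $\nabla_x\partial_t\Phi$. Here $\Phi$ carries \emph{two} derivatives, and there is no pointwise bound $t|\nabla_{x,t}\nabla_x\Phi|\aleq[\varphi]_{C^{0,1}}$ that makes the remaining $\int|F|\,|\nabla_{x,t}G|$ tractable. The paper treats these terms by observing that $\nabla_x\Phi$ is the harmonic extension of $\nabla\varphi$, invoking $[\nabla\varphi]_{BMO}\aleq[\varphi]_{C^{0,1}}$, and then running the Carleson-measure/BMO machinery of Section~\ref{s:traceestjacob} (non-tangential maximal function on $F$, non-tangential square function on $G$). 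Your trace paragraph omits this ingredient entirely, and without it the argument does not close. The appearance of $\Rz F,\Rz G$ in your target is a vestige of the CRW proof and plays no role here.
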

One observes readily that this estimate is completely trivial for $\laph$ replaced by the derivative $\nabla$.

\subsection{The integration by parts}
The main additional observation to start the integration by parts in this context is the following:

if $F: \R^{n+1}_+ \to \R$ is the harmonic extension of $f: \R^n \to \R$, then we  have the so-called Dirichlet-to-Neumann property  \[\partial_t F(x,0) = c \laph f\].

Thus, denoting $F,G,\Phi: \R^{n+1}_+ \to \R$ the harmonic extensions of $f,g,\varphi: \R^n \to \R$, then
\[
\begin{split}
&\left |\int_{\R^n}  \brac{f\laph g - \laph  f\, g}\, \varphi  \right |\\
=&\left |\int_{\R^{n+1}_+}  \partial_{t} \brac{\brac{F\partial_t G - \partial_tF\, G}\, \Phi } \right |.
\end{split}
\]
By a first cancellation we find readily
\[
\begin{split}
&\left |\int_{\R^n}  \brac{f\laph g - \laph  f\, g}\, \varphi  \right |\\
\leq &\left |\int_{\R^{n+1}_+}  \brac{F\partial_t G - \partial_tF\, G}\, \partial_t\Phi  \right |\\
&+\left |\int_{\R^{n+1}_+}  \brac{F\partial_{tt} G - \partial_{tt}F\, G}\, \Phi  \right |.
\end{split}
\]
By another integration by parts in $t$-direction, we find
\[
 \left |\int_{\R^n}  \brac{f\laph g - \laph  f\, g}\, \varphi  \right | \leq \mathcal{C}_1 + \mathcal{C}_2 + \mathcal{C}_3,
\]
where 
\[
\mathcal{C}_1 := \left |\int_{\R^{n+1}_+}  t\,  \brac{\brac{F\partial_t G - \partial_tF\, G}\, \partial_{tt}\Phi } \right |
\]
\[
\mathcal{C}_2 := \left |\int_{\R^{n+1}_+}  t\,  \partial_t \brac{F\partial_t G - \partial_tF\, G}\, \partial_{t}\Phi  \right |
\]
\[
\mathcal{C}_3 := \left |\int_{\R^{n+1}_+}  t\, \partial_t\brac{\brac{F\partial_{tt} G - \partial_{tt}F\, G}\, \Phi}  \right |
\]
We claim that we can estimate 
\begin{equation}\label{eq:KPgoal}
\begin{split}
&\mathcal{C}_1 + \mathcal{C}_2 + \mathcal{C}_3\\
\aleq& \int_{\R^{n+1}_+} t\, \brac{|F|\, |\nabla_{x,t} G|+|\nabla_{x,t} F|\, | G|}\,|\nabla_{x,t} \nabla_x \Phi| 
+\int_{\R^{n+1}_+} t\, |\nabla_{x,t} F|\, | \nabla_{x,t}G|\, | \nabla_x \Phi|  
\end{split}
\end{equation}
For $\mathcal{C}_1$ this is clear by the harmonicity of $\Phi$, $\partial_{tt} \Phi = -\lap_x \Phi$.

For $\mathcal{C}_2$ we find
\[
 \mathcal{C}_2 = \left |\int_{\R^{n+1}_+}  t\,  \brac{F\partial_{tt} G - \partial_{tt}F\, G}\, \partial_{t}\Phi  \right |
\]
Using the harmonicity of $F,G$ and the factoring the divergence,
\begin{equation}\label{eq:FpttG}
\begin{split}
 &F\partial_{tt} G - \partial_{tt}F\, G\\
 =&-\brac{F\lap_x G - \lap_x F\, G}\\
 =&-\nabla_x\cdot \brac{F\nabla_x G - \nabla_x F\, G}.
\end{split}
 \end{equation}
That is, an integration by parts in $x$-direction leads to
\[
 \mathcal{C}_2 = \left |\int_{\R^{n+1}_+}  t\,  \brac{F\nabla_x G - \nabla_x F\, G}\, \partial_{t}\nabla_{x}\Phi  \right |.
\]
This establishes the estimate \eqref{eq:KPgoal} for $\mathcal{C}_2$.

Using \eqref{eq:FpttG} in $\mathcal{C}_3$, 
\[
\mathcal{C}_3 = \left |\int_{\R^{n+1}_+}  t\, \partial_t\brac{\brac{F\nabla_x G - \nabla_x F\, G}\cdot\, \nabla_x \Phi}  \right |
\]
After computing the product rule for $\partial_t$ there is only one term not obviously satisfying the estimate \eqref{eq:KPgoal}, namely
\[
\begin{split}
 &\left |\int_{\R^{n+1}_+}  t\, \brac{F\nabla_x \partial_t G - \nabla_x \partial_t F\, G}\cdot\, \nabla_x \Phi  \right |\\
 \leq&\left |\int_{\R^{n+1}_+}  t\, \nabla_x \brac{F\partial_t G - \partial_t F\, G}\cdot\, \nabla_x \Phi  \right |\\
 &+\left |\int_{\R^{n+1}_+}  t\, \brac{\nabla_x F\partial_t G - \partial_t F\, \nabla_x G}\cdot\, \nabla_x \Phi  \right |\\
\end{split}
 \]
Thus, \eqref{eq:KPgoal} is established as well for $\mathcal{C}_3$.
\subsection{The trace estimates}
In \eqref{eq:KPgoal} it was established that for $f,g, \varphi \in C_c^\infty(\R^n)$ we have the following estimate for the respective harmonic extensions $F, G, \Phi:\R^{n+1}_+ \to \R$ 
\[
\begin{split}
&\int_{\R^n}  \brac{f\laph g - \laph  f\, g}\varphi  \\
\aleq& \int_{\R^{n+1}_+} t\, \brac{|F|\, |\nabla_{x,t} G|+|\nabla_{x,t} F|\, | G|}\,|\nabla_{x,t} \nabla_x \Phi| 
+\int_{\R^{n+1}_+} t\, |\nabla_{x,t} F|\, | \nabla_{x,t}G|\, | \nabla_x \Phi|  
\end{split}
\]
Theorem~\ref{th:KP} is then a consequence of the next two lemmas:
\begin{lemma}
Let $f,g, \varphi \in C_c^\infty(\R^n)$ then for the respective harmonic extensions $F, G, \Phi:\R^{n+1}_+ \to \R$, 
\[
 \int_{\R^{n+1}_+} t\, |\nabla_{x,t} F|\, | \nabla_{x,t}G|\, | \nabla_x \Phi| \aleq \|f\|_{L^{p}}\, \|g\|_{L^{p'}}\, [\varphi]_{\lip}.
\]
\end{lemma}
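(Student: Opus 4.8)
We first dispose of the factor $\nabla_x\Phi$ by an elementary $L^\infty$-bound, and then recognize the remaining weighted integral as a pairing of two Littlewood--Paley square functions that is handled by the estimate \eqref{eq:squaret} already recalled.

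First I would exploit that $\Phi(x,t)=p_t\ast\varphi(x)$ and that the Poisson kernel \eqref{eq:poissonkernel} has total mass $\int_{\R^n}p_t=1$. Differentiating under the convolution gives $\nabla_x\Phi(\cdot,t)=p_t\ast\nabla\varphi$, so that
\[
\|\nabla_x\Phi(\cdot,t)\|_{L^\infty(\R^n)}\le \|\nabla\varphi\|_{L^\infty(\R^n)}=[\varphi]_{\lip}\qquad\mbox{for all }t>0 .
\]
Consequently
\[
\int_{\R^{n+1}_+} t\, |\nabla_{x,t} F|\, |\nabla_{x,t} G|\, |\nabla_x \Phi|\, dx\, dt
\aleq [\varphi]_{\lip}\int_{\R^{n+1}_+} t\, |\nabla_{x,t} F|\, |\nabla_{x,t} G|\, dx\, dt .
\]

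Next, for each fixed $x$ I would apply the Cauchy--Schwarz inequality in the $t$-variable:
\[
\int_0^\infty t\,|\nabla_{x,t}F|\,|\nabla_{x,t}G|\,dt\ \le\ S_F(x)\,S_G(x),\qquad
S_H(x):=\brac{\int_0^\infty t\,|\nabla_{x,t}H(x,t)|^2\,dt}^{\frac12},
\]
where $H$ denotes the harmonic extension of a map $h\in C_c^\infty(\R^n)$. Integrating in $x$ and using H\"older's inequality with exponents $p$ and $p'$,
\[
\int_{\R^{n+1}_+} t\, |\nabla_{x,t} F|\, |\nabla_{x,t} G|\, dx\, dt\ \le\ \|S_F\|_{L^p(\R^n)}\,\|S_G\|_{L^{p'}(\R^n)} .
\]
Finally I would invoke \eqref{eq:squaret}. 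Writing $p_t(z)=t^{-n}p_1(z/t)$ and differentiating, one gets $\nabla_{x,t}p_t(z)=t^{-1}\kappa_t(z)$ with $\kappa_t(z)=t^{-n}\kappa(z/t)$, where each component of $\kappa$ is either a first derivative of $p_1$ or the divergence $-\nabla\cdot\brac{y\,p_1(y)}$; in either case $\int_{\R^n}\kappa=0$ and $|\kappa(y)|\aleq(1+|y|)^{-(n+1)}$, so $\kappa$ is admissible in \eqref{eq:squaret}. Since then $\nabla_{x,t}H=t^{-1}\kappa_t\ast h$ and hence $S_H(x)^2=\int_0^\infty |\kappa_t\ast h(x)|^2\,\frac{dt}{t}$, the estimate \eqref{eq:squaret} yields $\|S_F\|_{L^p}\aleq\|f\|_{L^p}$ and $\|S_G\|_{L^{p'}}\aleq\|g\|_{L^{p'}}$, and combining the displays above proves the lemma.

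Every ingredient here is either elementary or quoted from the results recalled earlier, so there is no genuinely hard step; the only point needing a short computation is checking that the kernel $\kappa$ coming from $\nabla_{x,t}p_t$ satisfies the mean-zero and decay hypotheses of \eqref{eq:squaret}, and that is where I would be most careful.
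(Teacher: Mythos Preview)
Your proof is correct and follows essentially the same route as the paper's: bound $|\nabla_x\Phi|$ in $L^\infty$ by $[\varphi]_{\lip}$ (the paper invokes the maximum principle, you use $\nabla_x\Phi=p_t\ast\nabla\varphi$ and $\|p_t\|_{L^1}=1$, which is the same thing), then apply Cauchy--Schwarz in $t$ and H\"older in $x$ to reduce to two vertical square functions, and finish with \eqref{eq:squaret} after writing $\nabla_{x,t}F=t^{-1}\kappa_t\ast f$. Your explicit identification of the $t$-component of $\kappa$ as $-\nabla\cdot(y\,p_1(y))$ is a nice way to see the mean-zero condition; the paper leaves this verification implicit.
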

\begin{proof}
By, e.g., the maximum principle (one can also use an estimate by the maximal function similar to \eqref{eq:maxfctest})
\[
 \|\nabla_x \Phi\|_{L^\infty(\R^{n+1}_+} \leq \|\nabla_x \varphi\|_{L^\infty(\R^n)}.
\]
Note that there is no reason for this estimate to be true when $\|\nabla_x \Phi\|_{L^\infty(\R^{n+1}_+)}$ is replaced by $\|\partial_t \Phi\|_{L^\infty(\R^{n+1}_+)}$.

Thus, H\"older's inequality implies 
\[
 \int_{\R^{n+1}_+} t\, |\nabla_{x,t} F|\, | \nabla_{x,t}G|\, | \nabla_x \Phi| \aleq [\varphi]_{\lip} 
 \brac{\int_{\R^n} \brac{\int_{0}^\infty t |\nabla_{x,t} F|^2\, dt}^{\frac{p}{2}}\, dx}^{\frac{1}{p}}\, \brac{\int_{\R^n} \brac{\int_{0}^\infty t |\nabla_{x,t} G|^2\, dt}^{\frac{p'}{2}}\, dx}^{\frac{1}{p'}}
\]
Now as in the Sections before we find a square function, namely we can write 
\[
 \nabla_{x,t} F = t^{-1} \kappa_t \ast f,
\]
for a kernel $\kappa$ satisfying the square function estimate, and conclude that 
\[
 \brac{\int_{\R^n} \brac{\int_{0}^\infty t |\nabla_{x,t} F|^2\, dt}^{\frac{p}{2}}\, dx}^{\frac{1}{p}}
 = \brac{\int_{\R^n} \brac{\int_{0}^\infty |\kappa_t \ast F(x)|^2\, \frac{dt}{t}}^{\frac{p}{2}}\, dx}^{\frac{1}{p}} \aleq \|f\|_{L^p(\R^n)}
\]
and in the same fashion we have 
\[
 \brac{\int_{\R^n} \brac{\int_{0}^\infty t |\nabla_{x,t} G|^2\, dt}^{\frac{p'}{2}}\, dx}^{\frac{1}{p'}}
 \aleq \|g\|_{L^{p'}(\R^n)}.
\]
This proves the claim.\end{proof}

\begin{lemma}
Let $f,g, \varphi \in C_c^\infty(\R^n)$ then for the respective harmonic extensions $F, G, \Phi:\R^{n+1}_+ \to \R$, 
\[
 \int_{\R^{n+1}_+} t\, |F|\, |\nabla_{x,t} G|\,|\nabla_{x,t} \nabla_x \Phi| \aleq \|f\|_{L^{p}}\, \|g\|_{L^{p'}}\, [\nabla \varphi]_{BMO}.
\]
\end{lemma}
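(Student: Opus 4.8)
The plan is to read the factor $|\nabla_{x,t}\nabla_x \Phi|$ as the piece carrying the $BMO$-information and the factor $|F|\,|\nabla_{x,t}G|$ as the ``$L^1$-side'' on which one uses Stein's substitute \eqref{eq:jacob1} for the $L^1$-$L^\infty$ H\"older inequality on $\R^{n+1}_+$. The crucial observation is that, since $\Phi = p_t\ast \varphi$, each $\partial_{x^j}\Phi = p_t\ast \partial_{x^j}\varphi$ is the harmonic extension of $\partial_{x^j}\varphi$; hence by the Carleson-measure characterization of $BMO$ the measure $t\,|\nabla_{x,t}\nabla_x\Phi|^2\, dx\, dt$ is Carleson with norm $\aleq [\nabla\varphi]_{BMO}^2$. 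Applying \eqref{eq:jacob1} componentwise, to the pair $(\partial_{x^j}\varphi,\ |F|\,|\nabla_{x,t}G|)$, and summing over $j$ gives
\[
\int_{\R^{n+1}_+} t\, |F|\, |\nabla_{x,t} G|\,|\nabla_{x,t} \nabla_x \Phi| \aleq [\nabla\varphi]_{BMO}\int_{\R^n}\brac{\int_{|x-y|<t} |F(y,t)|^2\,|\nabla_{x,t}G(y,t)|^2\, \frac{dy\, dt}{t^{n-1}}}^{\frac{1}{2}} dx .
\]

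Next I would bound the cone integral on the right. For $(y,t)$ with $|x-y|<t$ the Poisson integral obeys $\sup_{|x-y|<t}|F(y,t)| \aleq \mathcal{M}f(x)$ --- the same argument as for \eqref{eq:maxfctest}, only simpler, using that $p_t$ is dominated by a radially decreasing integrable kernel. Pulling $\mathcal{M}f(x)$ out of the inner integral leaves $\int_{\R^n}\mathcal{M}f(x)\,\big(\int_{|x-y|<t}|\nabla_{x,t}G(y,t)|^2\,\frac{dy\,dt}{t^{n-1}}\big)^{1/2} dx$. Since $G = p_t\ast g$ and $\nabla_{x,t}p_t(z) = t^{-1}\kappa_t(z)$ for a kernel $\kappa$ with $\int\kappa = 0$ (a derivative of \eqref{eq:poissonkernel}), one has $\nabla_{x,t}G(y,t) = t^{-1}\kappa_t\ast g(y)$, so $|\nabla_{x,t}G|^2\, t^{-(n-1)} = |\kappa_t\ast g|^2\, t^{-(n+1)}$ and the inner integral becomes exactly the area integral of $g$ occurring in \eqref{eq:ntsquare}.

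It then remains to apply H\"older's inequality in $x$ with the exponents $p,p'$: the factor $\mathcal{M}f$ is bounded in $L^p$ by $\aleq\|f\|_{L^p}$ (maximal theorem), and the area-integral factor is bounded in $L^{p'}$ by $\aleq\|g\|_{L^{p'}}$ by the nontangential square function estimate \eqref{eq:ntsquare} (applied with $p$ replaced by $p'$, and $\kappa$ as above). Combining the three estimates yields $[\nabla\varphi]_{BMO}\,\|f\|_{L^p}\,\|g\|_{L^{p'}}$, which is the claim.

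I expect the one genuinely delicate choice to be \emph{which} of $|F|$, $|\nabla_{x,t}G|$ absorbs the maximal function: it must be $F$. The surviving factor has to be fed into \eqref{eq:ntsquare}, which requires the generating kernel to have vanishing integral --- true for $\nabla_{x,t}p_t$ but false for $p_t$ itself, so one cannot run the square-function estimate on $F$ directly (and the bare area integral of $F$ diverges as $t\to0$). The remaining points --- identifying $\partial_{x^j}\Phi$ with the harmonic extension of $\partial_{x^j}\varphi$ so the Carleson characterization applies at the level of each component, and the nontangential-maximal bound for $F$ --- are routine.
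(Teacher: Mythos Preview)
Your proof is correct and follows essentially the same route as the paper: apply \eqref{eq:jacob1} with $\nabla_x\varphi$ in place of $\varphi$ (using that $\partial_{x^j}\Phi$ is the harmonic extension of $\partial_{x^j}\varphi$), pull out $\sup_{|x-y|<t}|F(y,t)|\aleq \mathcal{M}f(x)$, then use H\"older together with the maximal theorem and the nontangential square function estimate \eqref{eq:ntsquare} for $\nabla_{x,t}G = t^{-1}\kappa_t\ast g$. Your closing remark about why the maximal-function bound must go on $F$ rather than $G$ is a nice piece of extra insight not spelled out in the paper.
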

\begin{proof}
This is similar to the Jacobian estimate, Section~\ref{s:traceestjacob}: More precisely, by \eqref{eq:jacob1},
\[
\begin{split}
 &\int_{\R^{n+1}_+} t\, |F|\, |\nabla_{x,t} G|\,|\nabla_{x,t} \nabla_x \Phi|\\
 \leq &[\nabla_x \varphi]_{BMO} \int_{\R^n} \brac{\int_{|x-y|<t} |F(y,t)|^2\, |\nabla_{x,t} G(y,t)|^2 \frac{dy dt}{t^{n-1}} }^{\frac{1}{2}}dx
 \end{split}
 \]
By an estimate similar to \eqref{eq:maxfctest} we have
\[
 \sup_{(y,t):\, |x-y| < t} |F(y,t)| \aleq \mathcal{M}f(x).
\]
By H\"older inequality and the maximal theorem we thus obtain
\[
\begin{split}
 &\int_{\R^{n+1}_+} t\, |F|\, |\nabla_{x,t} G|\,|\nabla_{x,t} \nabla_x \Phi|\\
 \leq &[\nabla_x \varphi]_{BMO} \|f\|_{L^p(\R^n)}\, \brac{\int_{\R^n} \brac{\int_{|x-y|<t} |\nabla_{x,t} G(y,t)|^2 \frac{dy dt}{t^{n-1}} }^{\frac{p'}{2}}dx}^{\frac{1}{p'}}
 \end{split}
 \]
Again, we write
\[
 \nabla_{x,t} G =: t^{-1} \kappa_t \ast g,
\]
and use the non-tangential square function estimate \eqref{eq:ntsquare} to obtain
\[
\begin{split}
 &\brac{\int_{\R^n} \brac{\int_{|x-y|<t} |\nabla_{x,t} G(y,t)|^2 \frac{dy dt}{t^{n-1}} }^{\frac{p'}{2}}dx}^{\frac{1}{p'}}\\
 =& \brac{\int_{\R^n} \brac{\int_{|x-y|<t} |\kappa_t \ast g(y)|^2 \frac{dy dt}{t^{n+1}} }^{\frac{p'}{2}}dx}^{\frac{1}{p'}}\\
 \aleq& \|g\|_{L^{p'}(\R^n)}.
\end{split}
 \]
This establishes the claim.
\end{proof}

\section{On strengths and limitations of the method by harmonic extension}
In some sense, the extension method described above is similar to the Littlewood-Paley decomposition (which can be used to prove all of the statements alluded to above). One main advantage is that the technical argument of paraproducts can be avoided (at least for the commutators mentioned). But of course the mathematical deepness of the results means that the technical difficulties cannot disappear, they can just be shifted. While in the argument by Littlewood-Paley theory the space characterizations and compensation effects have to be dealt with at the same time, the argument by harmonic extension described here separates these two features: the compensation effects are observed from elementary computations (product rules and cancellations), and the spaces are characterized by trace spaces (which follow from quite deep facts from harmonic analysis). However, these trace space characterizations are independent of the specific commutator -- only the compensation phenomena change from commutator to commutator. 
Another limitations of the method by harmonic extension is that it is not clear how to treat, e.g. commutators involving general Calderon-Zygmund operators certain operators. Rather -- at least for the limit space estimates -- the extension needs to be adapted to the operators involved (which is relatively easy for simpler objects such as Riesz transforms, Riesz Potentials, and fractional Laplacians).

\section*{Acknowledgment}
This text is a result of an extended talk given at the ``International Workshop on Critical Phenomena'' at the  National Chiao Tung University. The author would like to thank the organizer D. Spector, the National Chiao Tung University Shing-Tung Yau Center, and the National Center for Theoretical Sciences for their hospitality.
\bibliographystyle{abbrv}
\bibliography{bib}%

\end{document}